\tikzstyle{dmatrix}=[matrix of math nodes,row sep=2.5em, column sep=2.5em,
\DeclareSymbolFont{epsilon}{OML}{ntxmi}{m}{it}
\DeclareMathSymbol{\epsilon}{\mathord}{epsilon}{"0F}
\theoremstyle{plain}
\newtheorem{theorem}{Theorem}[section]
\newtheorem{lemma}[theorem]{Lemma}
\newtheorem{prop}[theorem]{Proposition}
\newtheorem{cor}[theorem]{Corollary}
\newtheorem{prop/Def}[theorem]{Propsition/Definition}
\newtheorem{theorem/Def}[theorem]{Theorem/Definition}
\theoremstyle{definition}
\newtheorem{Def}[theorem]{Definition}
\newtheorem{rem}[theorem]{Remark}
\newtheorem{exa}[theorem]{Example}
\def \Q {{\mathbb Q}}
\def \R {{\mathbb R}}
\def \Z {{\mathbb Z}}
\def \P {{\mathbb P}}
\def \X {{ \mathcal{X}}}
\def \T {{\mathbb T}}
\def \div {{\operatorname{div}}}
\def \Im {{ \operatorname{Im}}}
\def \conv {{ \operatorname{conv}}}
\def \codim {{\operatorname{codim}}}
\newcommand{\on}[1]{\operatorname{#1}}
\newcommand{\ca}[1]{{\mathcal{#1}}}
\def \div {{ \operatorname{div}}}
\def \cone {{ \operatorname{cone}}}
\def \sp {{\operatorname{sp}}}
\def \rec {{\operatorname{rec}}}
\def \CH {{\operatorname{CH}}}
\def \sp {{\operatorname{sp}}}
\def \Div {{\operatorname{Div}}}
\tikzstyle{dmatrix}=[matrix of math nodes,row sep=2.5em, column sep=2.5em,
\numberwithin{equation}{section}
\title{On Chow groups of toric schemes over a DVR}
\author{Ana Mar\'ia Botero}
\date{}
\thanks{The author was supported by the collaborative research 
center SFB 1085 \emph{Higher Invariants - Interactions between
  Arithmetic Geometry and Global Analysis} funded by the Deutsche
Forschungsgemeinschaft. }
\subjclass{14M25; 14C15; 14C17; 05E14}
\keywords{Chow groups and rings, toric schemes, intersection theory, polyhedral complexes} 
\begin{document}

\maketitle

\begin{abstract}
We give an explicit combinatorial presentation of the Chow groups of a toric scheme over a DVR.  As an application, we compute the Chow groups of several toric schemes over a DVR and of their special fibers. 
\end{abstract}

\tableofcontents
\section{Introduction}
Toric varieties over a field have been studied for over 50 years. Their geometric and topological properties are governed by the convex geometry of rational polyhedral fans. This makes toric varieties an excellent testing ground for conjectures in algebraic geometry. Main references for toric varieties over a field are \cite{CLS}, \cite{ful}, \cite{KKMD} and \cite{oda}. Although in these references the authors normally work over an algebraically closed field, most of the theory extends to arbitrary fields without any obstruction. 

In this article we study the geometry of toric schemes over a discrete valuation ring (DVR).  Our main result is a combinatorial presentation of its Chow (homology) groups.  

Our setting is as follows. Let $K$ be a field equipped with a non-trivial discrete valuation. We denote by $R$ the corresponding valuation ring and by $\kappa$ the residue field.
Let $S = \operatorname{Spec}\left(R\right)$ and denote by $\eta$ and by $s$ the generic and the special point of $S$, respectively. For a scheme $\mathcal{X}$ over $S$, we set $\mathcal{X}_{\eta} \coloneqq \mathcal{X} \otimes_S \operatorname{Spec}(K)$ and $\mathcal{X}_{s} \coloneqq \mathcal{X} \otimes_S \operatorname{Spec}(\kappa)$ for the generic and the special fiber, respectively. 

Assume that $N$ is the lattice of one-parameter subgroups of a torus $\mathbb{T}_K$ of diemsnion $n$. As is usual in toric geometry, $M = N^{\vee}$ denotes its dual lattice. Let $\widetilde{N} \coloneqq N \oplus \Z$, $\widetilde{M} \coloneqq M \oplus \Z$ and set $\mathbb{T}_S = \on{Spec}\left(R[M]\right) \simeq \mathbb{G}_{m,S}^n$ a split torus over $S$. We consider toric schemes over $S$ (see Definition~\ref{def:toric-scheme}). 

Toric schemes over $S$ were first considered in \cite[section IV.3]{KKMD}, mainly motivated by compactification and degeneration problems.  A similar motivation appears later in \cite{smirnov}. More recently,  they have been considered in connection to tropical geometry (see e.g.~\cite{katz}). They have also played an important role for arithmetic purposes. In \cite{BPS} they are used to investigate the Arakelov geometry of toric varieties.  

Toric schemes over $S$ are described in terms of fans in $\widetilde{N}_{\R}=  N_{\mathbb{R}}\oplus\mathbb{R}$.
If we restrict to proper schemes, these can also be characterized by complete strongly convex rational (SCR)  polyhedral complexes in $N_{\R}$ (see Definition \ref{def:SCR} and Theorem \ref{th:corr-comp}). 

As in the case of toric varieties over a field, many geometric properties of a toric scheme $\X$ over $S$ can be read from the underlying combinatorial structure. For example, if $\X$ is regular and proper, and if $\Pi$ is the associated smooth complete SCR polyhedral complex (see Defintion \ref{def:smooth}), then the generic fiber $\X_{\eta}$ is a toric variety over the field $K$ with associated fan the recession fan of $\Pi$.  The (reduced) special fiber $\X_s$ is a union of toric varieties indexed by the polyhedra in $\Pi$, glued in a way that is compatible with the rational polyhedral structure (see Section~\ref{subsec:correspondence} for details).  Moreover, a cone $\sigma$ in the recession fan of $\Pi$ induces a horizontal cycle $V(\sigma)$, while a bounded polyhedron $\Lambda \in \Pi$ gives a vertical cycle $V(\Lambda)$. 
\subsection{Statement of the main results}

Let $\X$ be a proper regular scheme over $S$. We consider the rational Chow (homology) groups $\CH_k(\X/S)$(see Definition \ref{def:chow}). Note that we assume our Chow groups are tensored with $\Q$ and that we work with the notion of $S$-\emph{absolute} dimension. This coincides with the notion of \emph{relative} dimension given in \cite[Section~20.1]{fulint} plus 1. We choose to work with this absolute notion because it coincides with the one discussed in~\cite{BGS} and \cite{bot-ara}.  A reason for the name ``$S$-absolute dimension'' is given in Remark \ref{rem:dimension}. 

Let now $\X = \X_{\Pi}$ be the proper toric scheme over $S$ associated to a complete rsmooth SCR polyhedral complex $\Pi$ in $N_{\R}$ and set $\Sigma = \rec(\Pi)$ for the corresponding recession fan.

 Our first main result states that $\CH_k(\X/S)$ is generated by invariant cycles and gives a set of generators.  The following is Theorem \ref{th:inv-cycles}.
 
 \begin{theorem}\label{th:intro0}
Let $k\geq 0$ be an integer. Then the Chow group $\CH_k(\X/S)$ is generated by torus invariant horizontal and vertical cycles of $S$-absolute dimension $k$. A collection of generators is given by the family of horizontal cycles $\left[V(\sigma)\right]$ for $\sigma$ a $(n-k+1)$-dimensional cone in $\Sigma$ and of vertical cycles $\left[V(\Lambda)\right]$ for~$\Lambda$ an $(n-k)$-dimensional bounded polyhedron in $\Pi$. 
\end{theorem}

Our second main result gives a combinatorial presentation of the Chow groups $\CH_k(\X/S)$. In particular, it describes all of the relations between the above set of generators.  Before stating the result, we introduce some notation. For a cone $\tau \in \Sigma$ let $N(\tau)$ denote the quotient lattice $N/\left(N\cap \R\tau\right)$. Also, set $M(\tau) \coloneqq N(\tau)^{\vee}$. Similarly, for a polyhedron $\Lambda \in \Pi$ we let $\widetilde{N}(\Lambda) \coloneqq \widetilde{N}/\left(\widetilde{N}\cap \R \operatorname{cone}(\Lambda)\right)$ and $\widetilde{M}(\tau) \coloneqq \widetilde{N}(\tau)^{\vee}$.

The following is Theorem \ref{th:exact}.

\begin{theorem}\label{th:intro1}
Let $k\geq 0$ be an integer. Then the Chow group $\CH_k(\X/S)$ fits into an exact sequence 
\begin{align*}
\bigoplus_{\tau \in \Sigma(n-k)}\left(M(\tau)_{\Q} \oplus \Q\right)\oplus \bigoplus_{\Lambda \in \Pi(n-k-1)}\widetilde{M}(\Lambda)_{\Q} \to \Q^{|\Sigma(n-k+1)|} \oplus \Q^{|\Pi(n-k)|} \to \CH_{k}(\X/S) \to 0.
\end{align*}
\end{theorem}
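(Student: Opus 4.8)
The plan is to realize the exact sequence as the toric analog of the classical presentation of Chow groups of toric varieties (as in \cite{ful}, \cite{CLS}), adapted to the DVR setting where one must track both horizontal cycles (governed by the recession fan $\Sigma$) and vertical cycles (governed by the polyhedra of $\Pi$). The right-exactness at $\CH_k(\X/S)$ — that is, surjectivity of the middle map — is precisely the content of Theorem~\ref{th:intro0}, which I may assume; so the real work is exactness in the middle, namely identifying the kernel of the surjection with the image of the rational-equivalence map coming from invariant subvarieties of $S$-absolute dimension $k+1$.

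\textbf{Setting up the maps.} First I would fix the combinatorial source of all rational equivalences. Every relation among $k$-cycles arises from $\div$ of a rational function on an invariant subvariety of dimension $k+1$. Such subvarieties are of two types: horizontal ones $V(\tau)$ for $\tau \in \Sigma(n-k)$, and vertical ones $V(\Lambda)$ for $\Lambda \in \Pi(n-k-1)$. On $V(\tau)$, which is itself (essentially) a toric scheme over $S$ with character lattice refining $M(\tau) = M \cap \tau^{\perp}$, the invariant rational functions are given by $M(\tau)_\Q$ together with the extra $\Q$-factor coming from the uniformizer of $R$ (the extra $\Z$-direction in $\widetilde{N} = N \oplus \Z$); this accounts for the summand $M(\tau)_\Q \oplus \Q$. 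On the vertical stratum $V(\Lambda)$, which is a toric variety over $\kappa$ with character lattice $\widetilde{M}(\Lambda)$, the invariant rational functions give exactly $\widetilde{M}(\Lambda)_\Q$. So the left-hand map sends each character to the divisor of the corresponding monomial, expressed in the basis of codimension-one invariant cycles indexed by $\Sigma(n-k+1)$ and $\Pi(n-k)$.

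\textbf{Computing the boundary maps combinatorially.} The key step is to make the map $\on{div}$ fully explicit in terms of the combinatorics: for a character $m \in M(\tau)$, $\on{div}(\chi^m)$ on $V(\tau)$ is a sum over the rays of the star of $\tau$ with coefficients given by the pairing $\langle m, u_\rho\rangle$ of $m$ with primitive generators; for the vertical functions one must use the formula for $\on{div}$ on a polyhedral complex, where order of vanishing along a facet is computed via the affine-linear (rather than linear) evaluation, and where the uniformizer direction contributes the lattice distances/heights recorded by the SCR structure. I would assemble these into the cokernel presentation and then verify that the cokernel of the left map equals $\CH_k$. Since I already have surjectivity from Theorem~\ref{th:intro0}, it suffices to show that every rational equivalence between the chosen generators lies in the image of the displayed left map — i.e.\ that I have not missed any relations and that the listed invariant rational functions generate all of them.

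\textbf{The main obstacle.} The hardest part will be exactness in the middle, specifically proving that \emph{every} relation among the generators comes from an invariant rational function of the two prescribed types, with no ``mixed'' relations coupling horizontal and vertical cycles beyond those already accounted for. In the classical complete-toric-variety case this follows from the fact that rational equivalence on a toric variety is generated by torus-invariant divisors of invariant subvarieties, together with an excision/localization argument stratifying by torus orbits. Over the DVR the same strategy should work, but the bookkeeping is genuinely more delicate: the flatness of $\X$ over $S$, the interaction between the recession fan and the polyhedral complex, and the extra $\Q$-summands from the uniformizer must all be handled so that the horizontal relations on $V(\tau)$ correctly specialize and interact with the vertical strata $V(\Lambda)$ along the special fiber. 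I would therefore structure the proof as a localization sequence: remove the largest orbits, reduce to lower-dimensional invariant subschemes, and induct, checking at each stage that the boundary maps in the localization sequence are exactly the combinatorial $\on{div}$ maps described above, so that the long exact sequence collapses to the claimed four-term presentation.
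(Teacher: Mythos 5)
Your setup matches the paper's: surjectivity is delegated to Theorem~\ref{th:intro0}, the left-hand map is the divisor of an invariant rational function ($u\varpi^{\ell}\chi^{m}$ on a horizontal $V(\tau)$, accounting for the extra $\Q$-summand, and $\chi^{m}$ for $m\in\widetilde{M}(\Lambda)$ on a vertical $V(\Lambda)$), and the combinatorial formula for these divisors is exactly the paper's Lemma~\ref{lem:rel}. The whole content of the theorem is therefore, as you say, the middle exactness: every rational equivalence among invariant $k$-cycles must come from invariant rational functions on invariant $(k+1)$-dimensional subvarieties.

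It is precisely here that your plan has a gap. You propose to establish this by stratifying by torus orbits and inducting via localization sequences. But the localization sequence $\CH_k(Z)\to\CH_k(X)\to\CH_k(X\setminus Z)\to 0$ is only right exact: it propagates statements about \emph{generators} through a stratification (which is how Theorem~\ref{th:intro0} is proved), but it gives no control over the kernel of $\CH_k(Z)\to\CH_k(X)$ and hence cannot, by itself, show that an arbitrary rational function on an arbitrary $(k+1)$-dimensional subvariety produces a relation already in the image of $\alpha$. Even in the classical field case this is not a stratification argument: the result you cite as an input (\cite[Theorem~1]{fmss}) is proved by a genuine homogenization step, replacing an arbitrary rational equivalence by a $\T$-invariant one using the structure of the solvable group action; that step is the actual content and is absent from your outline. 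The paper avoids redoing this over $S$ by a different reduction: it introduces $\CH_k^{\T_S}(\X/S)=Z_k^{\T_S}(\X)/R_k^{\T_S}(\X)$, compares the invariant and ordinary localization sequences for $\X_s\hookrightarrow\X\hookleftarrow\X_\eta$ in a commutative ladder, and invokes \cite[Theorem~1]{fmss} on the two outer terms --- which are (unions of) toric varieties over the fields $\kappa$ and $K$, where the homogenization theorem is already available --- to deduce by a diagram chase that $\CH_k^{\T_S}(\X/S)\to\CH_k(\X/S)$ is an isomorphism. To repair your argument you would either have to import the FMSS isomorphism for the two fibers and run this ladder, or else prove an $S$-relative version of the homogenization of rational equivalences directly; the orbit-by-orbit localization induction alone will not produce the needed containment $\ker\beta\subseteq\operatorname{im}\alpha$.
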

The maps of the above sequence are defined from a localization sequence and are explicitly described in Section \ref{sec:chow-hom}.

 In order to relate these result to existing literature, we remark that given the description of toric schemes over $S$ in terms of non-complete fans in $N_{\mathbb{R}}\times\mathbb{R}$, there is a strong analogy between toric schemes over $S$ and (non-complete) toric varieties over a field.  In this setting, the analogue of Theorem \ref{th:intro0} is well-known (see e.g. \cite[Theorem 1]{fmss}, \cite[Theorem~2.1]{Brion-equi} or \cite[Proposition 2.1]{FS}). However, up to the knowledge of the author, it has not been stated for toric schemes over $S$ before. 
 
 As an application of our results, in Section \ref{subsec:exa}, we compute the Chow groups of several toric schemes over $S$ and of their special fibers. 
 
As a final remark, note that in order to define the product of two cycles in a toric scheme, one can consider  Fulton's operational Chow \emph{cohomology} rings or its equivariant version. These are defined in terms of bivariant classes (see \cite[Chapter 17, 20]{fulint} and \cite[Section 6]{EG} for the equivariant version). In \cite{bot-ara} a combinatorial characterization of the equivariant operational Chow cohomology rings of a toric scheme and that of its special fiber are given in terms of piecewise polynomial functions.  Moreover, in \cite[Theorem 3.16]{bot-ara} it is shown that the equivariant Chow groups of regular, proper toric models satisfy Poincaré duality.

\subsection{Outline of the paper}
 In Section \ref{sec:toric-schemes} we recall the combinatorial characterization of toric schemes over $S$. 
In particular, we state the correspondence between complete SCR polyhedral complexes and isomorphism classes of proper toric schemes over $S$ (see Theorem~\ref{th:corr-comp}). We also remind the description of horizontal and vertical torus orbits in terms of the combinatorics of the polyhedral complex in Section \ref{subsec:orbits} and the characterization of torus invariant Cartier divisors in terms of piecewise affine functions in Section~\ref{subsec:t-divisors}. 
 
 In Section \ref{sec:chow-groups} we first give basic definitions and properties concerning the Chow (homology) groups of schemes over $S$. Then we focus on the toric case and we prove Theorems \ref{th:intro0} and \ref{th:intro1}. In Section \ref{subsec:chow-special} we give a combinatorial description of the Chow groups of the special fiber. In Section \ref{subsec:exa} we give several examples where we calculate the Chow groups of toric schemes and their special fibers explicitly.  Finally, in Section \ref{subsec:rank} we give a combinatorial formula for the rank of the Chow groups of a toric scheme over $S$.

 \subsection{Conventions}\label{sec:conventions} The following notation will be used during the whole article. The symbol $K$ will denote a field equipped with a non-trivial discrete valuation. We will denote by $R$ the corresponding valuation ring, by $\mathfrak{m}$ its maximal ideal with generator $\varpi$, and by $\kappa$ the residue field, which we will 
assume to be algebraically closed.

We set $S = \operatorname{Spec}\left(R\right)$ and denote by $\eta$ and by $s$ the generic and the special point of $S$, respectively. For a scheme $\mathcal{X}$ over $S$, $\mathcal{X}_{\eta} \coloneqq \mathcal{X} \otimes_S \operatorname{Spec}(K)$ and $\mathcal{X}_{s} \coloneqq \mathcal{X} \otimes_S \operatorname{Spec}(\kappa)$ denote the generic and the special fiber, respectively. 

$N$ denotes an $n$-dimensional lattice and $M = N^{\vee}$ its dual lattice. For any ring $F$ we write $N_F$ and $M_F$ for the tensor products $N \otimes_{\Z} F$,  respectively $M_{F} = M \otimes_{\Z} F$.

Finally, for the whole article, we will assume that our Chow groups are tensored with $\Q$. 

\vspace{0.5cm}
\noindent
\emph{Acknowledgements:} I am thankful to José Burgos and Roberto Gualdi for useful discussions and comments.  I am also very grateful to an anonymous referee for many corrections and fruitful comments of a previous version. 
 
\section{Toric schemes over $S$}\label{sec:toric-schemes}
The goal of this section is to recall the combinatorial characterization of toric schemes over $S$ in terms of polyhedral complexes.  We follow \cite[Chapter 3]{BPS} and \cite[Chapter IV]{KKMD}.
 \subsection{SCR polyhedral complexes}\label{sec:SCR}
 
 \begin{Def}
 A \emph{polyhedron} in $N_{\R}$ is a convex set defined as the intersection of a finite number of closed halfspaces. It is \emph{strongly convex} if it does not contain any line. A \emph{polyhedral cone} is a polyhedron $\sigma$ such that $\lambda \sigma = \sigma$ for all $\lambda \in \R_{>0}$. A \emph{polytope} is a bounded polyhedron. 
 \end{Def}
A polyhedron $\Lambda$ in $N_{\R}$ can be also described as the Minkowski sum of a polyhedral cone and a polytope, i.e. there exists a set of vectors $\left\{b_j\right\}_{j=1}^k$ and a non-empty set of points $\left\{b_j\right\}_{j = k+1}^{\ell}$ in $N_{\R}$ such that 
 \begin{align}\label{eq:v-rep}
 \Lambda = \cone\left(b_1, \dotsc, b_k\right) + \conv\left(b_{k+1}, \dotsc, b_{\ell}\right),
 \end{align}
 where 
 \[
 \cone\left(b_1, \dotsc, b_k\right) = \left\{\sum_{j=1}^k \lambda_jb_j \; \big{|} \; \lambda_j \in \R_{\geq 0} \right\}
 \]
 is the cone generated by the given set of vectors (we assume that $\cone(\emptyset) = \{0\}$) and 
 \[
 \conv\left(b_{k+1}, \dotsc, b_{\ell}\right) = \left\{\sum_{j=k+1}^{\ell} \lambda_j b_j \; \big{|} \; \lambda_j \in \R_{\geq 0}, \sum_{j = k+1}^{\ell}\lambda_j = 1 \right\}.
 \]
 is the convex hull of the given set of points. 
 \begin{Def}\label{def:pol-rat} A polyhedron $\Lambda$ is said to be rational if it admits a representation as in \eqref{eq:v-rep} with $b_j \in N_{\Q}$ for $j = 1, \dotsc, \ell$. 
 \end{Def}
 \begin{Def}
 Let $\Lambda$ be a polyhedron in $N_{\R}$ and consider a representation as in \eqref{eq:v-rep}. The \emph{recession of} $\Lambda$ is the polyhedral cone $\rec(\Lambda)$ in $N_{\R}$ defined by 
 \[
 \rec(\Lambda) \coloneqq \cone\left(b_1, \dotsc, b_k\right).
 \]
 \end{Def}
 Similar to polytopes, polyhedra have supporting hyperplanes, faces, facets, vertices, edges, etc. (see e.~g.~\cite[Section 18]{ROCK}). Note that in general, in contrast to a polytope, a polyhedron can have no vertices. However, in the case where the recession cone is strongly convex this cannot happen. Indeed, if $\Lambda$ is a polyhedron with strongly convex recession cone $\sigma$, then by \cite[Lemma~7.1.1]{CLS} we have that the set $V \coloneqq \left\{v \in \Lambda \; \big{|} \; v \text{ vertex }\right\}$ is finite and non-empty. Moreover, in this case, $\Lambda$ can be written as the Minkowski sum  $\Lambda = \conv(V) + \sigma$.

As in the polytope case, if $\Lambda$ is rational and full dimensional, then it has a unique facet presentation 
\begin{align}\label{eq:facet-rep}
\Lambda = \left\{m \in N_{\R} \; \big{|} \; \langle m, u_F \rangle \geq -a_F \; \forall \text{ facet } F \right\},
\end{align}
where $u_F \in M$ is the unique primitive inward pointing facet normal vector, and $a_F \in \Q$ (see \cite[Theorem 19.1]{ROCK}).

 \begin{Def}\label{def:cone-pol}
 Let $\Lambda$ a polyhedron in $N_{\R}$. The cone $c(\Lambda)$ is defined by 
 \[
 c(\Lambda) \coloneqq \overline{\R_{>0}\left(\Lambda \times \{1\}\right)} \subseteq N_{\R} \times \R_{\geq 0}.
 \]
 It is a closed cone in $N_{\R} \times \R_{\geq 0}$.
\end{Def}
We now gather some notions regarding polyhedral complexes. Here we follow mainly \cite[Chapter 2]{BPS}. 

\begin{Def}\label{def:SCR}
A polyhedral complex $\Pi$ in $N_{\R}$ is a collection of polyhedra in $N_{\R}$ satisfying the following conditions. 
\begin{enumerate}
\item Every face of an element of $\Pi$ is again an element in $\Pi$.
\item Every two elements of $\Pi$ are either disjoint or they intersect in a common face.
\end{enumerate}
The support of $\Pi$ is defined as the set 
\[
|\Pi| \coloneqq \bigcup_{\Lambda \in \Pi}\Lambda.
\]
For any integer~$k$, the \emph{$k$-skeleton} of a polyhedral complex $\Pi$ is the collection of all polyhedra in $\Pi$ of dimension~$k$.
We denote it by~$\Pi(k)$. 

The polyhedral complex $\Pi$ is said to be \emph{complete} if $|\Pi| = N_{\R}$. It is said to be \emph{strongly convex}, respectively \emph{rational}, if all of its elements are strongly convex, respectively rational. It is called \emph{conical} if all of its elements are polyhedral cones.  A strongly convex conical polyhedral complex is called a \emph{fan}.
\end{Def}
In the following, a SCR polyhedral complex  will stand for a strongly convex rational polyhedral complex. 
\begin{Def} Let $\Pi$ be a polyhedral complex in $N_{\R}$. The \emph{recession of} $\Pi$ is defined by
\[
\rec(\Pi) \coloneqq \left\{\rec(\Lambda) \; \big{|} \; \Lambda \in \Pi \right\}.
\]
The \emph{cone of} $\Pi$ is the conical polyhadral complex in $N_{\R} \times \R_{\geq 0}$ defined by 
\[
c(\Pi) \coloneqq \left\{c(\Lambda) \; \big{|}\; \Lambda \in \Pi \right\} \cup \left\{\sigma \times \{0\} \; \big{|}\; \sigma \in \rec(\Pi) \right \}.
\]
\end{Def}
\begin{rem} 

In general, it is not true that the recession or the cone of a given polyhedral complex is again a polyhedral complex (see \cite[Example 2.1.6]{BPS}). However, by the following proposition this is true in the complete case..

\end{rem}
\begin{prop}
Let $\Pi$ be a complete polyhedral complex in $N_{\R}$. Then $\rec(\Pi)$ and $c(\Pi)$ are complete conical polyhedral complexes in $N_{\R}$ and $N_{\R} \times \R_{\geq 0}$, respectively. If $\Pi$ is strongly convex, respectively rational, then $\rec(\Pi)$ and $c(\Pi)$ are fans, respectively rational. 
\end{prop}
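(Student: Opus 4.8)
The plan is to prove the two assertions—about $\rec(\Pi)$ and about $c(\Pi)$—in parallel, since the cone construction essentially reduces the recession statement to a statement about an honest fan in one higher dimension. First I would treat $c(\Pi)$, because it is the cleaner object: each $c(\Lambda)$ is by Definition \ref{def:cone-pol} a closed cone in $N_{\R} \times \R_{\geq 0}$, and the homogenization $\Lambda \mapsto c(\Lambda)$ is known to send faces to faces and to be compatible with intersections. I would verify the two axioms of Definition \ref{def:SCR} for the collection $c(\Pi)$: that every face of a cone $c(\Lambda)$ is again in the collection, and that any two cones meet in a common face. The key point is that the faces of $c(\Lambda)$ are precisely the cones $c(F)$ over faces $F$ of $\Lambda$, together with the ``horizontal'' faces $\rec(F) \times \{0\}$ lying in the hyperplane $N_{\R} \times \{0\}$—which is exactly why the definition of $c(\Pi)$ explicitly adjoins the cones $\sigma \times \{0\}$ for $\sigma \in \rec(\Pi)$. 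Completeness of $c(\Pi)$ follows because any point $(v,t)$ with $t>0$ lies in $c(\Lambda)$ for the $\Lambda$ containing $v/t$ (using $|\Pi| = N_{\R}$), and points with $t = 0$ are handled by the recession part.

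Next I would deduce the statement for $\rec(\Pi)$. The natural route is to observe that $\rec(\Pi)$ is exactly the ``slice at height $0$'' of $c(\Pi)$: the cones $\sigma \times \{0\}$ appearing in $c(\Pi)$ are in bijection with the recession cones $\sigma = \rec(\Lambda)$, and the face/intersection structure of $\rec(\Pi)$ is inherited from that of $c(\Pi)$ restricted to the hyperplane $N_{\R} \times \{0\}$. Since intersecting a complete fan with a rational subspace (here a coordinate hyperplane) yields a complete fan in that subspace, completeness of $\rec(\Pi)$ in $N_{\R}$ drops out; alternatively one argues directly that every ray direction $v \in N_{\R}$ lies in some $\rec(\Lambda)$ by taking a large-parameter limit of points in $|\Pi| = N_{\R}$.

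The main obstacle—and the only place where completeness is genuinely used—is the face-compatibility axiom (2) for recession cones. For general $\Pi$ it can fail (this is the content of the preceding remark and \cite[Example 2.1.6]{BPS}): two polyhedra can be disjoint or meet in a face while their recession cones overlap in a lower-dimensional but non-face configuration. The crux is therefore to show that completeness forces the recession cones to fit together along common faces. I would argue this via the cone construction: because $c(\Pi)$ is a bona fide complete polyhedral complex, the intersection $c(\Lambda_1) \cap c(\Lambda_2)$ is a common face, and slicing this identity at height $0$ identifies $\rec(\Lambda_1) \cap \rec(\Lambda_2)$ as a common face of the two recession cones. This transfers the hard combinatorial fitting from the recession picture, where it is delicate, to the cone picture, where completeness makes it automatic.

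Finally, the strong convexity and rationality refinements are routine bookkeeping carried along throughout. Strong convexity of each $c(\Lambda)$ and of each cone in $\rec(\Pi)$ follows from strong convexity of the $\Lambda \in \Pi$ together with the fact that $c(\Lambda)$ sits inside the half-space $N_{\R} \times \R_{\geq 0}$, so it contains no line; hence these complexes are fans in the sense of Definition \ref{def:SCR}. Rationality is preserved because the vertices and recession generators $b_j \in N_{\Q}$ in the representation \eqref{eq:v-rep} produce rational generators for $c(\Lambda)$ and for $\rec(\Lambda)$, so all cones in $c(\Pi)$ and $\rec(\Pi)$ admit rational generating sets. I would state these closure properties as short remarks once the face and completeness structure is established, since no further geometric input is required.
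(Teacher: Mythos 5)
The paper does not give an independent argument here: it simply cites \cite[Proposition 2.1.7 and Remark 2.1.12]{BPS}, so the only question is whether your self-contained sketch actually closes the one nontrivial point. It does not. You correctly identify the crux --- the face-compatibility axiom (2) for the recession cones, which is exactly where \cite[Example 2.1.6]{BPS} shows completeness is indispensable --- but your resolution of it is circular. You propose to first establish that $c(\Pi)$ is a complete conical polyhedral complex and then ``slice at height $0$'' to get that $\rec(\Lambda_1)\cap\rec(\Lambda_2)$ is a common face. However, the cones $\sigma\times\{0\}$ for $\sigma\in\rec(\Pi)$ are by definition members of the collection $c(\Pi)$, so the assertion that $c(\Pi)$ satisfies axiom (2) already \emph{contains} the assertion that any two recession cones meet in a common face; the same issue reappears for pairs $c(\Lambda_1), c(\Lambda_2)$ with $\Lambda_1\cap\Lambda_2=\emptyset$ or meeting in a low-dimensional face, since the height-$0$ part of $c(\Lambda_1)\cap c(\Lambda_2)$ is $\bigl(\rec(\Lambda_1)\cap\rec(\Lambda_2)\bigr)\times\{0\}$, which in general strictly contains $\rec(\Lambda_1\cap\Lambda_2)\times\{0\}$. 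Your phrase ``completeness makes it automatic in the cone picture'' is not an argument: completeness of the \emph{support} $|c(\Pi)|=N_\R\times\R_{\geq 0}$ says nothing by itself about how the cones fit together along the boundary hyperplane $t=0$. A genuine proof must show, for instance, that for every direction $u$ the polyhedra of $\Pi$ whose recession contains $u$ still cover a full neighbourhood at infinity in that direction, and deduce from this and finiteness of $\Pi$ that overlapping recession cones are forced to meet along faces; this is the content of the (nontrivial) argument in \cite[Proposition 2.1.7]{BPS}.

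The remaining ingredients of your sketch are fine: the covering argument for $|c(\Pi)|$ and $|\rec(\Pi)|$ via large-parameter limits, the fact that every face of $\rec(\Lambda)$ is $\rec(F)$ for some nonempty face $F$ of $\Lambda$ (needed for axiom (1), and true --- maximize the defining linear functional over $\Lambda$), and the rationality/strong-convexity bookkeeping (though note that ``$c(\Lambda)$ lies in a half-space'' only rules out non-horizontal lines; you also need that $\rec(\Lambda)$ is strongly convex when $\Lambda$ is). But as written the proof of the central claim assumes its own conclusion, so the argument is incomplete.
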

\begin{proof}
This is \cite[Proposition 2.1.7 and Remark 2.1.12]{BPS}. 
\end{proof}
The following proposition relates polyhedral complexes in $N_{\R}$ with conical polyhedral complexes in $N_{\R} \times \R_{\geq 0}$ in the complete case. 
\begin{prop}\label{cor:cone1}
The correspondence 
\[
\Pi \mapsto c(\Pi)
\]
is a bijection between the set of complete polyhedral complexes in $N_{\R}$ and the set of complete conical polyhedral complexes in $N_{\R} \times \R_{\geq 0}$. Its inverse is the correspondence that to each complete conical polyhedral complex $\Gamma$ in $N_{\R} \times \R_{\geq 0}$ associates the polyhedral complex in $N_{\R}$ obtained by intersecting $\Gamma$ with the hyperplane $N_{\R} \times \{1\}$. These bijections preserve rationality and strong convexity.
\end{prop}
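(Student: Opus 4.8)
The plan is to establish the bijection by explicitly constructing both maps and verifying they are mutually inverse, relying on the previous proposition (which guarantees that $c(\Pi)$ is genuinely a complete conical polyhedral complex, and rational/strongly convex when $\Pi$ is). First I would record the key geometric fact underlying everything: for a polyhedron $\Lambda \subseteq N_{\R}$, slicing its cone $c(\Lambda) \subseteq N_{\R} \times \R_{\geq 0}$ at height one recovers $\Lambda$, that is, $c(\Lambda) \cap (N_{\R} \times \{1\}) = \Lambda \times \{1\}$. This follows directly from Definition~\ref{def:cone-pol}, since $c(\Lambda) = \overline{\R_{>0}(\Lambda \times \{1\})}$, and a point of the closure lying at height exactly $1$ must already lie in $\R_{>0}(\Lambda \times \{1\})$ at parameter $1$ (the recession directions sit at height $0$ and do not interfere with the height-one slice). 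Dually, I would note that the recession cone $\rec(\Lambda) \times \{0\}$ is exactly the part of $c(\Lambda)$ at height zero, so the cone $c(\Lambda)$ decomposes into its height-one profile $\Lambda$ and its base $\rec(\Lambda)$.

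Next I would define the candidate inverse map precisely. Given a complete conical polyhedral complex $\Gamma$ in $N_{\R} \times \R_{\geq 0}$, set
\[
\Phi(\Gamma) \coloneqq \left\{ \gamma \cap \left(N_{\R} \times \{1\}\right) \; \big{|} \; \gamma \in \Gamma \right\},
\]
identifying the slice $N_{\R} \times \{1\}$ with $N_{\R}$. I would check that $\Phi(\Gamma)$ is a polyhedral complex: each slice is a polyhedron (an affine section of a cone), faces of a slice arise as slices of faces of the corresponding cone, and the intersection-along-common-faces condition descends from the same condition for $\Gamma$. Completeness of $\Phi(\Gamma)$ follows because every point $v \in N_{\R}$ gives a point $(v,1)$ which lies in some $\gamma \in \Gamma$ by completeness of $\Gamma$, hence in the corresponding slice.

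Then I would verify the two composition identities. For $\Phi \circ c = \operatorname{id}$: applying $\Phi$ to $c(\Pi)$ slices each $c(\Lambda)$ at height one, returning $\Lambda$ by the opening fact, while the purely horizontal cones $\sigma \times \{0\}$ from $\rec(\Pi)$ slice to the empty set and contribute nothing; hence $\Phi(c(\Pi)) = \Pi$. For $c \circ \Phi = \operatorname{id}$: starting from $\Gamma$, each cone $\gamma$ with nonempty height-one slice $\Lambda_\gamma$ satisfies $c(\Lambda_\gamma) = \gamma$ because $\gamma$ is a cone whose height-one profile is $\Lambda_\gamma$ and $c$ reconstructs precisely that cone; the cones of $\Gamma$ lying entirely in $N_{\R} \times \{0\}$ are recovered as the recession part added in the definition of $c(\Phi(\Gamma))$, since these base cones are forced to coincide with the recessions of the positive-height cones by the face conditions on $\Gamma$. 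Finally, preservation of rationality and strong convexity is inherited from the previous proposition in one direction and is immediate for slices in the other, as a rational cone sliced by a rational hyperplane yields a rational polyhedron, and strong convexity of the slice follows from strong convexity of the cone.

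The main obstacle I expect is the bookkeeping around the height-zero cones: I must argue carefully that the cones of $\Gamma$ contained in $N_{\R} \times \{0\}$ are exactly the faces $\rec(\Lambda_\gamma) \times \{0\}$ forced by the positive-height cones, so that nothing is lost or double-counted when passing back and forth. Concretely, the delicate point is showing that $c(\Phi(\Gamma))$ reproduces \emph{all} of $\Gamma$, including its base, and this rests on the fact that completeness plus the closedness of the cones $c(\Lambda)$ pin down the base cones uniquely as recessions. Once this is framed in terms of the height-one and height-zero decomposition, the verification becomes routine, but stating it cleanly is where the care is needed.
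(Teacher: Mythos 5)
Your argument is correct in outline, but it is genuinely different from what the paper does: the paper's entire proof is a citation to \cite[Corollary 2.1.13]{BPS}, whereas you give a direct, self-contained verification. What your route buys is transparency about exactly where each hypothesis enters --- the height-one slice identity $c(\Lambda)\cap(N_{\R}\times\{1\})=\Lambda\times\{1\}$ and the reconstruction $c(\Lambda_\gamma)=\gamma$ for positive-height cones are both clean consequences of convexity and closedness, and you correctly isolate the height-zero bookkeeping as the only delicate point. What the citation buys the paper is precisely not having to write that bookkeeping out. If you want your sketch to be airtight, the step still to be supplied is the one you flag: to see that $c(\Phi(\Gamma))$ recovers \emph{all} of the base of $\Gamma$, you need (i) that every cone of $\Gamma$ contained in $N_{\R}\times\{0\}$ is a face of some positive-height cone $\gamma$ (this uses completeness, e.g.\ by perturbing a relative-interior point upward and passing to a limit), and (ii) the standard convex-geometry fact that every face of $\rec(\Lambda_\gamma)$ is itself the recession cone of a face of $\Lambda_\gamma$, so that these height-zero faces all appear as $\rec(\Lambda_{\gamma'})$ for faces $\gamma'$ of $\gamma$, which lie in $\Gamma$ by the complex axioms. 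With those two points made explicit, your proof is complete and arguably more informative than the reference the paper relies on.
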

\begin{proof}
This is \cite[Corollary 2.1.13]{BPS}.
\end{proof}

\begin{Def}\label{def:smooth} A fan in $N_{\R}\times \R_{\geq 0}$ is \emph{smooth}, if each of its cones is smooth, i.e. generated by a subset of a basis of $N \times \Z$.  A complete SCR polyhedral complex $\Pi$ is \emph{smooth} if $c(\Pi)$ is smooth. 

\end{Def}
\begin{rem}
Our definition of a \emph{smooth} fan coincides with the one in \cite[Definition~1.2.16]{CLS}. In the literature, sometimes also the word \emph{regular} is used (for example in \cite{oda}). This is however not the same as the notion of a \emph{regular fan} in \cite[Definition~2.5.4]{BPS}.
\end{rem}

 \subsection{Toric schemes over a DVR}\label{subsec:toric-schemes}

We recall the combinatorial characterization of proper toric schemes over $S$ in terms of complete polyhedral complexes. 
We follow \cite[Section~3.5]{BPS} (see also \cite[Chapter IV.3]{KKMD}).

Assume that $N$ is the lattice of one-parameter subgroups of a torus $\mathbb{T}_K$. As before, $M = N^{\vee}$ denotes its dual lattice. Let $\widetilde{N} \coloneqq N \oplus \Z$ and $\widetilde{M} \coloneqq M \oplus \Z$ and set $\mathbb{T}_S \coloneqq \operatorname{Spec}R[M] \simeq \mathbb{G}_{m,S}^n$. This is a split torus over $S$. 

The following is \cite[Definition 3.5.1]{BPS}.  
\begin{Def}\label{def:toric-scheme}
A \emph{toric scheme over $S$} of relative dimension $n$ is a normal integral separated scheme of finite type $\mathcal{X}$ ovr $S$, equipped with a dense open embedding $\mathbb{T}_K \hookrightarrow \mathcal{X}_{\eta}$ and an $S$-action of $\mathbb{T}_S$ over $\mathcal{X}$ that extends the action of $\mathbb{T}_K$ on itself by translations. 
\end{Def}
Note that if $\mathcal{X}$ is a toric scheme over $S$, then $\mathcal{X}_{\eta}$ is a toric variety over $K$ with torus $\mathbb{T}_K$.  

The following is \cite[Definition 3.5.2]{BPS}.
\begin{Def}
Let $X$ be a toric variety over $K$ with torus $\mathbb{T}_K$ and let $\mathcal{X}$ be a toric scheme over $S$ with torus $\mathbb{T}_S$. We say that $\mathcal{X}$ is a \emph{toric model of $X$ over $S$} if the identity morphism on $\mathbb{T}_K$ can be extended to an isomorphism from $X$ to $\mathcal{X}_{\eta}$.
If $\mathcal{X}'$ and $\mathcal{X}$ are two toric models of $X$ over $S$ and $\alpha \colon \mathcal{X}' \to \mathcal{X}$ is an $S$-morphism, we say that $\alpha$ is a \emph{morphism of toric models} if its restriction to $\mathbb{T}_K$ is the identity. Two toric models are said to be \emph{isomorphic} if there exists a morphism of toric models between them which is an isomorphism.
\end{Def}
\begin{Def}
A toric model $\mathcal{X}$ is \emph{regular}, respectively \emph{proper},  if it is a regular, respectively proper, scheme. 
\end{Def} 
\begin{rem}\label{rem:base-change}
It follows from the Semi-stable reduction theorem II in \cite[Chapter IV Section 3]{KKMD} that any toric model $\mathcal{X}$ of $X$ over $S$ is dominated by a regular one (possibly after base change to $S' = \operatorname{Spec}(R')$ for a discrete valuation ring $R' \supseteq R$, integral over $R$).
\end{rem}

\subsection{Combinatorial characterization}\label{subsec:correspondence}

Let $\widetilde{\Sigma}$ be a rational fan in $N_{\R} \times \R_{\geq 0}$. To $\widetilde{\Sigma}$ we can associate a toric scheme over $S$, which we denote by $\mathcal{X}_{\widetilde{\Sigma}}$. This is done in the usual way by associating an affine toric scheme 
\[
\mathcal{X}_{\sigma} = \on{Spec}\left(R\left[\widetilde{M}_{\sigma}\right]/(\chi^{(0,1)}-\varpi)\right)
\]
to each $\sigma \in \widetilde{\Sigma}$ and then applying an appropriate gluing construction 
\begin{align}\label{eq:gluing}
\mathcal{X}_{\widetilde{\Sigma}} \coloneqq \bigcup_{\sigma \in \widetilde{\Sigma}}\mathcal{X}_{\sigma}.
\end{align}
Here we have set $\widetilde{M}_{\sigma} = \widetilde{M} \cap \sigma^{\vee}$ and $R\left[\widetilde{M}_{\sigma}\right]$ for the associated semigroup $R$-algebra. 

We gather here some properties regarding this construction. Details can be found in \cite[Section~3.5]{BPS}.

\begin{enumerate}
\item Given $\widetilde{\Sigma} \subseteq N_{\R} \times \R_{\geq 0}$, we set 
\[
\Pi \coloneqq \widetilde{\Sigma} \cap \left(N_{\R} \times \{1\}\right) \quad \text{and} \quad \Sigma \coloneqq \widetilde{\Sigma} \cap \left(N_{\R} \times \{0\}\right).
\]
This decomposes $\widetilde{\Sigma}$ in two different types of polyhedral cones. The ones in $\Sigma$ (i.~e.~those contained in the hyperplane $N_{\R} \times \{0\}$) and the ones that are cones over polyhedra in $\Pi$.  If $\widetilde{\Sigma}$ is complete in $N_{\R}\times \R_{\geq 0}$, then by Proposition \ref{cor:cone1}, the former cones are the ones coming from recession cones of the polyhedra in $\Pi$. In general, if $\sigma \in \Sigma$, then $\mathcal{X}_{\sigma}$ is contained in the generic fiber and it agrees with the affine toric varity $X_{\sigma}$. If $\sigma$ is not contained in $N_{\R} \times \{0\}$ then $\mathcal{X}_{\sigma}$ is not contained in the generic fibre.

\item Given polyhedra $\Lambda$, $\Lambda'$ in $\Pi$ with $\Lambda \subseteq \Lambda'$, we have a natural open immersion of affine toric schemes $\mathcal{X}_{\Lambda} \hookrightarrow \mathcal{X}_{\Lambda'}$. Moreover, if a cone $\sigma \in \Sigma$ is a face of a cone $c(\Lambda)$ for some $\Lambda \in \Pi$, then the affine toric variety $X_{\sigma}$ is also an open subscheme of $\mathcal{X}_{\Lambda}$. Then \eqref{eq:gluing} can be written as 
\[
\mathcal{X}_{\widetilde{\Sigma}} = \bigcup_{\Lambda \in \Pi}\mathcal{X}_{\Lambda} \cup \bigcup_{\sigma \in \Sigma}X_{\sigma}.
\]

\item There are open immersions
\[
\mathbb{T}_K \hookrightarrow \mathcal{X}_{\widetilde{\Sigma},\eta} \hookrightarrow \mathcal{X}_{\widetilde{\Sigma}}
\]
of schemes over $S$ and the action of $\mathbb{T}_S$ over $\mathcal{X}_{\widetilde{\Sigma}}$ extends that of $\T_K$ on itself. It follows that $\mathcal{X}_{\widetilde{\Sigma}}$ is a toric scheme over $S$. 
\item $\Sigma$ is a fan and defines a toric variety $X_{\Sigma}$ over $K$ which coincides with the generic fiber $\mathcal{X}_{\widetilde{\Sigma},\eta}$. It follows that $\mathcal{X}_{\widetilde{\Sigma}}$ is a toric model of $X_{\Sigma}$.
\item The special fiber $\mathcal{X}_{\widetilde{\Sigma},s}$ has an induced action by $\mathbb{T}_{\kappa}$ but, in general, it is not a toric variety over $\kappa$ (it is not necessarily irreducible nor reduced). It is reduced if and only if the vertices of all $\Lambda \in \Pi$ are contained in the lattice $N$, and in this case the reduced schemes associated to its irreducible components are toric varieties over $\kappa$ with this action (see Subsection \ref{subsec:orbits}). 

\item If the fan $\widetilde{\Sigma}$ is complete, then the scheme $\mathcal{X}_{\widetilde{\Sigma}}$ is proper over $S$. In this case, the set $\left\{\mathcal{X}_{\Lambda}\right\}_{\Lambda \in \Pi}$ is an open cover of $\mathcal{X}_{\widetilde{\Sigma}}$.

\end{enumerate}

In fact, one can show that toric schemes over $S$ of relative dimension $n$ are classified by rational fans in $N_{\R} \times \R_{\geq 0}$. The following follows from \cite[Chapter~IV Section 3]{KKMD} and Proposition \ref{cor:cone1} (see also \cite[Theorem~3.5.3]{BPS}. The statement about the regularity is \cite[item (d) at pg.~192]{KKMD}). 
%
\begin{theorem}\label{th:corr-comp}
The correspondence 
\[
\Pi \longmapsto \mathcal{X}_{c(\Pi)}
\]
gives a bijection between complete SCR polyhedral complexes in $N_{\R}$ and isomorphism classes of proper toric schemes over $S$ of relative dimension $n$. Moreover, $\Pi$ is smooth if and only if $\mathcal{X}_{c(\Pi)}$ is regular.

\end{theorem}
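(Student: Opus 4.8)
The plan is to deduce the statement by composing the two bijections already at our disposal, namely Proposition~\ref{cor:cone1} and Theorem~\ref{th:kkmd}, while carefully tracking the extra adjectives (completeness, strong convexity, rationality, regularity) through the composition. First I would invoke Proposition~\ref{cor:cone1}: the assignment $\Pi \mapsto c(\Pi)$ is a bijection between complete polyhedral complexes in $N_{\R}$ and complete conical polyhedral complexes in $N_{\R} \times \R_{\geq 0}$, and it preserves rationality and strong convexity. Hence it restricts to a bijection between complete SCR polyhedral complexes in $N_{\R}$ and complete conical strongly convex rational polyhedral complexes in $N_{\R} \times \R_{\geq 0}$. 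By Definition~\ref{def:SCR} a strongly convex conical polyhedral complex is precisely a fan, so the target of this restricted bijection is exactly the set of complete rational fans in $N_{\R} \times \R_{\geq 0}$.

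Next I would apply Theorem~\ref{th:kkmd}: the assignment $\widetilde{\Sigma} \mapsto \X_{\widetilde{\Sigma}}$ is a bijection from rational fans in $N_{\R} \times \R_{\geq 0}$ to isomorphism classes of toric schemes over $S$ of relative dimension $n$, under which completeness of $\widetilde{\Sigma}$ corresponds to properness of $\X_{\widetilde{\Sigma}}$. Restricting this bijection to complete rational fans therefore yields a bijection onto the isomorphism classes of proper toric schemes over $S$ of relative dimension $n$. Composing with the restricted bijection of the previous paragraph, I obtain that $\Pi \mapsto \X_{c(\Pi)}$ is a bijection from complete SCR polyhedral complexes in $N_{\R}$ onto isomorphism classes of proper toric schemes over $S$ of relative dimension $n$, which is the first assertion.

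For the regularity statement, I would observe that it becomes essentially definitional once the above is in place: Definition~\ref{def:regular} declares $\Pi$ to be regular exactly when $c(\Pi)$ is regular, whereas Theorem~\ref{th:kkmd} asserts that $\X_{\widetilde{\Sigma}}$ is regular if and only if $\widetilde{\Sigma}$ is. Applying the latter with $\widetilde{\Sigma} = c(\Pi)$ chains the two equivalences into $\Pi \text{ regular} \iff c(\Pi) \text{ regular} \iff \X_{c(\Pi)} \text{ regular}$, giving the second assertion.

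Since all the substantive work has been delegated to the two cited results, I do not expect a genuinely hard step; the only point demanding care is the bookkeeping around the intermediate object $c(\Pi)$. Concretely, one must verify simultaneously that $c(\Pi)$ is (i) complete, so that Theorem~\ref{th:kkmd} outputs a \emph{proper} scheme, and (ii) an honest fan, i.e.\ strongly convex and conical, so that Theorem~\ref{th:kkmd} is even applicable. The identification ``strongly convex conical polyhedral complex $=$ fan'' from Definition~\ref{def:SCR} is exactly what reconciles these two requirements and ensures that the composition lands precisely in the class of proper toric schemes, with nothing left over.
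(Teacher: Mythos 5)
Your proposal is correct and matches the paper's own derivation: the paper obtains Theorem~\ref{th:corr-comp} precisely by combining Theorem~\ref{th:kkmd} with Proposition~\ref{cor:cone1}, exactly the composition you carry out. Your explicit bookkeeping of completeness, rationality, strong convexity, and regularity through the intermediate object $c(\Pi)$ is just a more detailed write-up of the same argument.
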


Fix a complete fan $\Sigma$ in $N_{\R}$. 
\begin{cor}\label{th:models}
The correspondence 
\[
\Pi \longmapsto \mathcal{X}_{c(\Pi)}
\]
gives a bijection between complete SCR polyhedral complexes in $N_{\R}$ such that $\rec(\Pi) = \Sigma$ and equivariant isomorphism classes of proper toric models of $X_{\Sigma}$ over $S$. As before $\Pi$ is smooth if and only if $\mathcal{X}_{c(\Pi)}$ is regular.

\end{cor}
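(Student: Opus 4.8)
The plan is to deduce Corollary~\ref{th:models} from the already-established bijection in Theorem~\ref{th:corr-comp} by restricting it to an appropriate subclass on each side and checking that the correspondence respects the distinguished fan~$\Sigma$. Since Theorem~\ref{th:corr-comp} already gives a bijection between all complete SCR polyhedral complexes in $N_{\R}$ and all isomorphism classes of proper toric schemes over $S$ of relative dimension~$n$, the entire task reduces to identifying which complexes $\Pi$ correspond to proper toric models of the fixed toric variety $X_{\Sigma}$, as opposed to models of some other toric variety over $K$.

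First I would recall that, by property~(5) in the properties list of Section~\ref{subsec:correspondence}, the generic fiber of $\mathcal{X}_{c(\Pi)}$ is the toric variety $X_{\Sigma'}$ over $K$ whose fan is $\Sigma' = c(\Pi) \cap (N_{\R} \times \{0\})$, and by property~(1) together with Proposition~\ref{cor:cone1} this fan is precisely the recession fan $\rec(\Pi)$ (under the canonical identification of $N_{\R} \times \{0\}$ with $N_{\R}$). Thus $\mathcal{X}_{c(\Pi),\eta} \cong X_{\rec(\Pi)}$ as toric varieties over $K$. The key observation is then that $\mathcal{X}_{c(\Pi)}$ is a toric model of $X_{\Sigma}$ in the sense of Definition~\ref{def:toric-scheme} (the model definition) if and only if the identity on $\mathbb{T}_K$ extends to an isomorphism $X_{\Sigma} \xrightarrow{\sim} \mathcal{X}_{c(\Pi),\eta} = X_{\rec(\Pi)}$, and by the classification of toric varieties over a field such a torus-equivariant isomorphism extending the identity on the torus exists if and only if the fans coincide, i.e.\ if and only if $\rec(\Pi) = \Sigma$.

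With this equivalence in hand I would run the restriction of the bijection in both directions. In the forward direction, a complete SCR polyhedral complex $\Pi$ with $\rec(\Pi) = \Sigma$ maps to $\mathcal{X}_{c(\Pi)}$, which by Theorem~\ref{th:corr-comp} is a proper toric scheme over $S$, and by the previous paragraph it is a toric model of $X_{\Sigma}$; its isomorphism class as a toric model is well-defined. In the reverse direction, any proper toric model of $X_{\Sigma}$ over $S$ is in particular a proper toric scheme over $S$ of relative dimension~$n$, hence equals $\mathcal{X}_{c(\Pi)}$ for a unique complete SCR complex $\Pi$ by Theorem~\ref{th:corr-comp}, and the model condition forces $\rec(\Pi) = \Sigma$. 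Injectivity and surjectivity of the restricted map are then inherited directly from Theorem~\ref{th:corr-comp}. The regularity statement transfers verbatim, since the equivalence ``$\Pi$ regular $\iff \mathcal{X}_{c(\Pi)}$ regular'' of Theorem~\ref{th:corr-comp} holds for every $\Pi$ and is unaffected by the constraint on the recession fan.

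The main subtlety to be careful about is the notion of equivalence on each side: on the geometric side isomorphism classes are taken in the category of \emph{toric models} (isomorphisms restricting to the identity on $\mathbb{T}_K$, per the model definition), which is finer than mere abstract isomorphism of $S$-schemes, so I must verify that two complexes with recession fan $\Sigma$ give isomorphic toric models if and only if they are equal. This again follows because a morphism of toric models is determined by its restriction to $\mathbb{T}_K$, which is the identity, so an isomorphism of toric models induces the identity on fans and hence $c(\Pi) = c(\Pi')$, giving $\Pi = \Pi'$ by Proposition~\ref{cor:cone1}; conversely equal complexes give literally the same model. I expect this bookkeeping about the two flavors of isomorphism to be the only real point requiring attention, while everything else is a direct specialization of Theorem~\ref{th:corr-comp}.
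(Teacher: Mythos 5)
Your proposal is correct and matches the paper's (implicit) argument: the paper states this as an immediate corollary of Theorem~\ref{th:corr-comp}, obtained by restricting that bijection to complexes with $\rec(\Pi)=\Sigma$, using that the generic fiber of $\mathcal{X}_{c(\Pi)}$ is $X_{\rec(\Pi)}$. Your extra care about the two notions of isomorphism (toric models vs.\ toric schemes) is a sensible elaboration of a point the paper leaves unspoken, not a deviation.
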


Given a complete polyhedral complex $\Pi$, in order to simplify notation, we write $\mathcal{X}_{\Pi}$ for $\mathcal{X}_{c(\Pi)}$. 

\begin{Def}
Note that $\Sigma$ itself is a complete SCR polyhedral complex in $N_{\R}$ with $\rec(\Sigma) = \Sigma$. The toric scheme $\mathcal{X}_{\Sigma}$ is hence a model over $S$ of $X_{\Sigma}$ which is called the \emph{canonical model}.
\begin{rem} Note that the special fiber $\mathcal{X}_{\Sigma,s}$ of the canonical model is the toric variety over $\kappa$ defined by the fan $\Sigma$.  Hence, in this case, the special fiber is in particular both irreducible and reduced.
\end{rem}

\end{Def}
\begin{rem}
Consider the set $R(\Sigma)$ of all complete smooth SCR polyhedral complexes $\Pi$ such that $\operatorname{rec}(\Pi) = \Sigma$.  It is endowed with the partial order given by
\[
\Pi' \geq \Pi \quad \text{iff} \quad \text{ $\Pi'$ is a subdivision of $\Pi$}.
\]
Given two elements in $R(\Sigma)$ there is always a third one dominating both. This follows from the Semi-stable reduction theorem II in \cite[Chapter IV Section 3]{KKMD}. Hence this set is directed.
We note that the canonical model cooresponds to a minimal element in $R(\Sigma)$. 

On the other hand, given $\Pi' \geq \Pi$ in $R(\Sigma)$ we have an induced proper morphism of toric models $\pi \colon \X_{\Pi'} \to \X_{\Pi}$ and conversely, any such morphism is induced by a subdivision(see \cite[item (f) at pg.~193]{KKMD}).  Hence there is a one-to-one correspondence between $R(\Sigma)$ and the directed set of regular proper toric models of $X_{\Sigma}$. The set $R(\Sigma)$ is used in \cite{bot-ara}  to study inverse and direct limits of Chow groups of toric models. 
\end{rem}

\subsection{Torus orbits}\label{subsec:orbits}

Let $\Pi$ be a complete smooth SCR polyhedral complex in $N_{\R} \times\{1\}$ and let $\X_{\Pi}$ be the corresponding proper regular toric scheme over $S$. Set $\Sigma = \rec(\Pi)$. We view $\Sigma$ in $N_{\R} \times \{0\}$. We recall the combinatorial characterization of torus orbits in $\X_{\Pi}$ given in \cite[item (e') at pg.~192]{KKMD} and in \cite[Section~3.5]{BPS}.

There are two kinds of torus orbits. 
\begin{itemize}
\item First, there is a bijection between cones in $\Sigma$ and the set of orbit closures under the action of $\T_K$ on $\X_{\Pi, \eta}$ taking $\sigma$ to the Zariski closure $V(\sigma)$ in $\X_{\Pi}$ of the orbit $O(\sigma)\subseteq \X_{\Pi,\eta} = X_{\Sigma}$. Then $V(\sigma)$ is a horizontal scheme over $S$ in the sense that the structure morphism $V(\sigma) \to S$ is dominant, of relative dimension $n-\dim(\sigma)$. 

Moreover, $V(\sigma)$ has the structure of a proper toric scheme over $S$. Indeed, let $N(\sigma) = N/(N \cap \R\sigma)$ and $\pi_{\sigma} \colon N_{\R} \to N(\sigma)_{\R}$ the linear projection. Consider the following collection of subsets in $N(\sigma)_{\R}$
\[
\Pi(\sigma) = \left\{ \pi_{\sigma}(\Lambda) \; \big{|} \; \Lambda \in \Pi \; , \; \sigma \subseteq \rec(\Lambda) \right\}.
\]
Then $\Pi(\sigma)$ is a complete SCR polyhedral complex in $N(\sigma)_{\R}$ and we have an isomorphism of toric schemes over $S$
\[
V(\sigma) \simeq \X_{\Pi(\sigma)}
\]
(see \cite[Proposition 3.5.7]{BPS}).
\item On the other hand there is a bijection between $\Pi$ and the set of orbits under the action of $\T_{\kappa}$ on the special fiber $\X_{\Pi,s}$. Let $\widetilde{N} = N \oplus \Z$. Given $\Lambda \in \Pi$, set $\widetilde{N}(\Lambda) = \widetilde{N}/(\widetilde{N} \cap \R c(\Lambda))$, $\widetilde{M}(\Lambda) = \widetilde{N}(\Lambda)^{\vee}$ and $\pi_{\Lambda} \colon \widetilde{N}_{\R} \to \widetilde{N}(\Lambda)_{\R}$ the linear projection. Then to $\Lambda$ one associates a vertical subscheme $V(\Lambda)$ (vertical in the sense that it is contained in the special fiber). We refer to \cite[pg.~98]{BPS} for the precise definition. It has the structure of a toric variety over $\kappa$ with torus $\T(\Lambda) = \operatorname{Spec}\left(\kappa\left[\widetilde{M}(\Lambda)\right]\right)$ and corresponding fan given by 
\[
\Pi(\Lambda) = \left\{\pi_{\Lambda}(c(\Lambda')) \; \big{|} \; \Lambda' \in \Pi \; , \; \Lambda \prec \Lambda' \right\}
\]
in $\widetilde{N}(\Lambda)_{\R}$,
where \enquote{$\prec$}  stands for \enquote{is a face of} (see \cite[Proposition 3.5.8]{BPS}). In particular, it has absolute dimension $n-\dim\Lambda$. 

There is a one-to-one correspondence between the vertices of $\Pi$ and the irreducible components of the special fiber given by 
\begin{align}\label{rem:vertex}
v \mapsto V(v).
\end{align}
The torus orbits contained in $V(v)$ correspond to the polyhedra $\Lambda \in \Pi$ containing $v$. In particular, the components given by two vertices $v$ and $v'$ share an orbit of absolute dimension $\ell$ if and only if there exists a polyhedron of dimension
$n - \ell$ containing both $v$ and $v'$.

It follows from  \cite[Example 3.6.11]{BPS} that the special fiber is reduced if and only if all the vertices of $\Pi$ are in $N$.  
\end{itemize}
We will frequently use the following definitions.
\begin{Def} Let $\Pi$ be a complete smooth SCR polyhedral complex in $N_{\R} \times\{1\}$ and $\Sigma = \rec(\Pi)$. We view $\Sigma$ in $N_{\R} \times \{0\}$. Let $0 \leq k \leq n-1$. Fix $\tau \in \Sigma(k)$ and $\Lambda \in \Pi(k)$.  
\begin{enumerate}
\item For any $\sigma \in \Sigma(k+1)$ with $\tau \prec \sigma$, we let $v_{\sigma/\tau}$ be the corresponding \emph{lattice normal vector}. This is the primitive generator of the image of $\sigma$ under the map $\pi_{\tau}\colon N_{\R} \to N(\tau)_{\R}$.  If $k = 0$ we simply write $v_{\sigma}$ for $v_{\sigma/\{0\}}$. 

\item For $\Gamma \in \Pi(k)$ with recession cone $\tau$, we let $v_{\Gamma}$ be the image of $\Gamma$ under $\pi_{\tau}$. It is a vertex of the polyhedral complex $\Pi(\tau)$ (see Remark \ref{rem:dim} below).  

\item For any $\Lambda' \in \Pi(k+1)$ with $\Lambda \prec \Lambda'$, we denote by $v_{\Lambda'/\Lambda}$ the primitive generator of the ray generated by the image of $c(\Lambda')$ under the map $\pi_{\Lambda}\colon \widetilde{N}_{\R} \to \widetilde{N}(\Lambda)_{\R}$. 
\end{enumerate}

\end{Def}
\begin{rem}\label{rem:dim} In item (2) above,  the assumption that $\tau$ and $\Gamma$ have the same dimension is necessary, since otherwise the image of $\Gamma$ under $\pi_{\tau}$ might not be a point.
\end{rem}

\subsection{$\T_S$-invariant divisors on toric schemes}\label{subsec:t-divisors}

As before, let $\X = \X_{\Pi}$ be the complete regular toric scheme over $S$ associated to a complete smooth SCR polyhedral complex $\Pi$ in $N_{\R}$ and let $\Sigma = \rec(\Pi)$. Assume that the vertices of $\Pi$ are in the lattice $N$. 

We will consider $\Q$-Weil divisors on $\X$. There are two morphisms from $\T_S \times \X$ to $\X$: the torus action, denoted by $\mu$ and the second projection, denoted by $p_2$. A $\Q$-Weil divisor $D$ on $\X$ is called $\T_S$-invariant if $\mu^*D = p_2^*D$.

As in the case of toric varieties over a field, if $\X$ is regular, then $\T_S$-invariant divisors on $\X$ can be characterized by some piecewise affine functions. We explain this briefly.

A \emph{rational piecewise affine function on} $\Pi$ is a piecewise affine function $\phi$ on $\Pi$ such that on each $\Lambda \in \Pi$ we have 
\[
\phi|_{\Lambda}(u) = \langle m_{\Lambda},u\rangle + \ell_{\Lambda}
\]
with $\left(m_{\Lambda},\ell_{\Lambda}\right) \in M_{\Q} \times \Q$ (see \cite[section 2.5 and Proposition 2.6.7]{BPS}). 

In this case, its recession function $\psi = \rec(\phi)$ is the rational piecewise linear function on $\Sigma$ given by 
\[
\psi|_{\on{rec}(\Lambda)}(u) = \langle m_{\Lambda},u\rangle.
\]
Now, to $\phi$ we want to associate a $\T_S$-invariant $\Q$-Weil divisor on $\X_{\Pi}$.  For this, we have to take into account that we have two types of orbits. Each vertex $v \in \Pi(0)$ defines a vertical invariant divisor $V(v)$ and every ray $\tau \in \Sigma(1)$ defines a horizontal prime Weil divisor $V(\tau)$. 
\begin{Def} 
Let $\phi$ be a rational piecewise affine function on $\Pi$. Then one sets 
\[
D_{\phi} = \sum_{v \in \Pi(0)}-\phi(v)V(v) + \sum_{\tau \in \Sigma(1)}-\psi(v_{\tau})V(\tau).
\]
\end{Def}
This is a $\T_S$-invariant $\Q$-Weil divisor on $\X_{\Pi}$.


The following is a reformulation of \cite[Theorem 3.6.7 and Proposition 3.6.10]{BPS} 
\begin{theorem}\label{th:div-lin}
Let $\Pi$ and $\X_{\Pi}$ as above. The correspondence $\phi \mapsto D_{\phi}$ is an isomorphism between the group of rational piecewise affine functions on $\Pi$ and the group of $\T_S$-invariant $\Q$-Weil divisors on $\X_{\Pi}$. Moreover, if $\phi_1$ and $\phi_2$ are two rational piecewise affine functions on $\Pi$ then $D_{\phi_1}$ is rationally equivalent to $D_{\phi_2}$ if and only if $\phi_1- \phi_2$ is affine.
\end{theorem}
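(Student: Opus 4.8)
The plan is to reduce the statement to a linear-algebra computation on the cone $c(\Pi)$, together with the standard description of invariant principal divisors on a toric scheme. First I would pin down the target group. By the orbit--cone dictionary of Section~\ref{subsec:orbits}, the $\T_S$-invariant prime Weil divisors of $\X_{\Pi}$ (i.e. the codimension-one invariant subschemes) are exactly the vertical divisors $V(v)$ for $v\in\Pi(0)$ and the horizontal divisors $V(\tau)$ for $\tau\in\Sigma(1)$; hence the group of $\T_S$-invariant $\Q$-Weil divisors is the free $\Q$-vector space on these generators, indexed by $\Pi(0)\sqcup\Sigma(1)$. The assignment $\phi\mapsto D_{\phi}$ is $\Q$-linear by construction, so it remains to prove bijectivity and then to identify the principal classes.

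For the isomorphism I would pass to the cone. Homogenization $\phi\mapsto\Phi$, with $\Phi(u,t)=t\,\phi(u/t)$, identifies the group of rational piecewise affine functions on $\Pi$ with the group of rational conical piecewise linear functions on $c(\Pi)$; this is the function-level shadow of the bijection in Proposition~\ref{cor:cone1}. Because $\Pi$ is regular, $c(\Pi)$ is a smooth, hence simplicial, complete fan (Definition~\ref{def:regular}), and its rays are precisely $c(v)$ for $v\in\Pi(0)$ and $\tau\times\{0\}$ for $\tau\in\Sigma(1)$. On a simplicial fan a conical piecewise linear function is freely and uniquely determined by its values at the primitive ray generators, since on each maximal cone these generators form a basis and the resulting linear pieces automatically agree on shared faces. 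That tuple of ray-values is, up to sign, the coefficient vector of $D_{\phi}$ (the value at $c(v)$ recovering $\phi(v)$ up to the lattice length of the vertex, and the value at $\tau\times\{0\}$ being $\psi(v_{\tau})$), so $\phi\mapsto D_{\phi}$ is the composite of two bijections. If one prefers to argue directly on $\Pi$: $D_{\phi}=0$ forces $\psi\equiv 0$ on all rays of the smooth fan $\Sigma=\rec(\Pi)$, hence $\psi\equiv 0$, so each slope $m_{\Lambda}$ vanishes on $\Span(\rec(\Lambda))$; since the vertex differences of $\Lambda$ together with $\rec(\Lambda)$ span the direction space of $\Lambda=\conv(V)+\rec(\Lambda)$ and $\phi$ vanishes at every vertex, we get $m_{\Lambda}=0$ and then $\ell_{\Lambda}=0$, i.e. $\phi\equiv 0$; surjectivity is the inverse homogenization of a freely prescribed support function on $c(\Pi)$.

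For the rational-equivalence statement I would use that, exactly as for toric varieties over a field, a $\T_S$-invariant principal divisor on $\X_{\Pi}$ is the divisor of a $\T_S$-semi-invariant rational function, and every such function is a character $\chi^{(m,\ell)}$, $(m,\ell)\in\widetilde{M}_{\Q}$, up to a unit. Thus the invariant principal divisors are spanned over $\Q$ by the $\div(\chi^{m})$, $m\in M_{\Q}$, together with $\div(\varpi)=\div(\chi^{(0,1)})=\X_s$. Computing orders of vanishing along each ray $\rho$ of $c(\Pi)$ via the standard formula $\ord_{V(\rho)}(\chi^{u^\ast})=\langle u^\ast,u_{\rho}\rangle$ shows that $\div(\chi^{(m,\ell)})$ is the invariant divisor whose ray-coefficients are the values of the linear functional $(m,\ell)$ at the primitive generators of $c(\Pi)$; that is, it is the image under the same coefficient correspondence of the affine function $\phi_0=\langle m,\cdot\rangle+\ell$. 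Conversely, under homogenization the globally linear functions on $c(\Pi)$ are precisely the elements of $\widetilde{M}_{\Q}$, whose de-homogenizations are the affine functions on $\Pi$. Hence $D_{\phi}$ is principal if and only if $\phi$ is affine, and by linearity $D_{\phi_1}\sim D_{\phi_2}$ if and only if $\phi_1-\phi_2$ is affine.

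The first-part isomorphism is pure linear algebra on the simplicial fan $c(\Pi)$ and presents no difficulty. The main obstacle is the rational-equivalence direction, namely the claim that there are no invariant rational equivalences beyond those coming from characters. Over the DVR this requires two points absent in the field case: one must verify that a semi-invariant rational function on $\X_{\Pi}$ is a character times a unit for the ring $R[\widetilde{M}]$ subject to the relation $\chi^{(0,1)}=\varpi$, and one must correctly track the multiplicities $\langle(0,1),u_{c(v)}\rangle$ with which the components $V(v)$ occur in $\X_s=\div(\varpi)$, so that the divisor of a character matches the divisor attached to the affine function $\phi_0$ on the nose (these multiplicities are all $1$ precisely when every vertex lies in $N$, i.e. when the special fibre is reduced). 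Getting this bookkeeping right is the crux, and it is exactly what the cited results \cite[Theorem 3.6.7, Proposition 3.6.10]{BPS} provide.
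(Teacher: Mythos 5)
Your argument is essentially correct, but it is worth pointing out that the paper does not prove this statement at all: Theorem~\ref{th:div-lin} is stated with the single line ``this follows from \cite[Theorem 3.6.7 and Proposition 3.6.10]{BPS}'', so you are supplying an argument where the paper only supplies a citation. What you reconstruct is in fact the standard proof underlying the cited results, and it is consistent with the paper's own supporting material: your identification of the invariant prime divisors with $\Pi(0)\sqcup\Sigma(1)$ matches Section~\ref{subsec:orbits}, your homogenization to $c(\Pi)$ is the function-level version of Proposition~\ref{cor:cone1}, the free-prescription-of-ray-values step is exactly where regularity of $\Pi$ (Definition~\ref{def:regular}) enters, and your computation of $\div(u\varpi^{\ell}\chi^{m})$ together with the observation that a rational function with invariant divisor restricts to a unit on $\mathbb{T}_K$, hence equals $u\varpi^{\ell}\chi^{m}$, is precisely the content of the paper's Lemma~\ref{lem:rel} and its corollary. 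The one point you flag but do not resolve is genuinely the delicate one: when a vertex $v$ does not lie in $N$, the primitive generator of $c(v)$ is $(\operatorname{mult}(v)\,v,\operatorname{mult}(v))$, so $\operatorname{ord}_{V(v)}(\chi^{m}\varpi^{\ell})=\operatorname{mult}(v)\bigl(\langle m,v\rangle+\ell\bigr)$, and this only agrees with the coefficient $\langle m,v\rangle+\ell$ appearing in the paper's definition of $D_{\phi}$ and in Lemma~\ref{lem:rel} when the special fiber is reduced; in general one must adopt the normalization of \cite{BPS} (which carries the multiplicities) for the identity $D_{\phi_m}=\div(f)$ to hold on the nose, and without that the ``if'' direction of the rational-equivalence claim would not follow from your diagonal-rescaling picture alone, since rescaling coefficients does not preserve the $\Q$-span of principal divisors. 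Since you explicitly defer exactly this bookkeeping to the cited results of \cite{BPS}, your proposal is no less complete than the paper's treatment, but to make it self-contained you would need to carry the factors $\operatorname{mult}(v)$ through the whole correspondence.
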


\section{Chow groups of toric schemes}\label{sec:chow-groups}

Recall our notations from Section \ref{sec:conventions}. In particular, recall that we assume that our Chow groups are tensored with $\Q$.

\subsection{Chow (homology) groups}
In this subsection we recall some terminology regarding Chow groups of schemes over $S$. Our main references is~\cite[Chapter~20]{fulint}.
Let $\mathcal{X}$ be a separated scheme of finite type over $S$.

For a closed integral subscheme $\mathcal{V}$ of $\mathcal{X}$ over~$S$, its \emph{$S$-(absolute) dimension} is defined as
\[
\dim_S\mathcal{V}:=\text{tr.}\deg (K(\mathcal{V})/K(T))-\codim(T,S) +1,
\]
where $T$ is the closure of the image of $\mathcal{V}$ in $S$ by the structural morphism (see Remark \ref{rem:dimension} below for the reason of the name $S$-(absolute) dimension). 

For example, it follows from \cite[Lemma 20.1]{fulint} that if $\ca W \subseteq \ca V$ is a closed integral subscheme of $\ca V$, then 
\[
\dim_S\ca V = \dim_S\ca W + \codim(\ca W, \ca V),
\]
and if $\ca V^{\circ}\subseteq \ca V$ is a non-empty open subscheme, then 
\[
\dim_S\ca V^{\circ}= \dim_S\ca V.
\]

We remark that this notion coincides with the notion of (relative) dimension given in \cite[Chapter 17 and 20]{fulint} plus 1.  We choose to work with this absolute notion because it coincides with the one discussed in~\cite[section~(1.2)]{BGS} and in \cite{bot-ara}.  (See also \cite[Example 29.1.5]{fulint} where this notion is suggested). 
\begin{Def}\label{def:chow}
For any integer $k$, we write $\CH_k(\X/S)$ for the Chow (homology) group of $S$-absolute dimension $k$ algebraic cycles modulo rational equivalence.  We will consider rational coefficients, i.e.~we always consider the Chow groups tensored with $\Q$.  We let
\[
\CH_*(\X/S) \coloneqq \bigoplus_k\CH_k(\X/S).
\]
\end{Def}

\begin{rem}\label{rem:dimension}
Note that we have 
\begin{eqnarray*}
\CH_k(\X_s/S) &=& \CH_{k} (\X_s/\kappa) \quad \text{ and }\\
\CH_k(\X_{\eta}/S) &=& \CH_{k-1} (\X_{\eta}/K),
\end{eqnarray*}
where $\CH_{k} (\X_s/\kappa)$ and $\CH_{k-1} (\X_{\eta}/K)$ denote the classical Chow groups for schemes defined over a field (i.e.  $k$-(resp. ($k-1$))-dimensional cycles modulo rational equivalence). This is the reason why we call this the $S$-absolute dimension instead of simply the absolute dimension.
\end{rem}

By \cite[Section 20.1]{fulint} one has covariance of the Chow groups $\CH_k$ for proper morphisms and contravariance for flat morphisms (of some relative dimension).

\subsubsection*{A localization sequence}

Consider the immersion  of the special fiber $\iota \colon \X_s \hookrightarrow \X$. This is a regular closed immersion of codimension $1$. Further, let $j \colon \X_{\eta} \hookrightarrow \X$ the open immersion of the generic fiber.  Then it follows from \cite[Proposition~1.8]{fulint} and the observation \cite[end of page~394]{fulint} that we have a \emph{localization sequence} 
 
\begin{eqnarray}\label{eq:localization}
\begin{tikzcd}
\CH_k(\X_s/S)\arrow["{\iota}_*"]{r} &\CH_k(\X/S)\arrow["{j}^*"]{r} &\CH_{k}(\X_{\eta}/S)\arrow{r} &0.
\end{tikzcd}
\end{eqnarray}

\begin{prop}\label{lem:zero}
Assume that $\X$ is irreducible and that the special fiber $\X_s$ has Chow group of zero cycles of rank one, i.e. that the degree map $\on{deg}\colon \CH_0(\X_s/\kappa) \to \Q$ is an isomorphism.
Then~$\CH_{0}(\X/S) = \{0\}$.
\end{prop}
\begin{proof}
One has to show that any closed integral subscheme $p$ of $\X$ of $S$-absolute dimension $0$ vanishes in $\CH_{0}(\X/S)$.

The localization sequence for $k = 0$ is of the form 
\[
\begin{tikzcd}
\CH_0(\X_s/\kappa)\arrow["{\iota}_*"]{r} &\CH_0(\X/S)\arrow["{j}^*"]{r} &0.
\end{tikzcd}
\]
Hence, $[p]= \iota_*(\gamma)$ for some $\gamma \in \CH_0(\X_s/\kappa)$.
By the assumption on universal triviality, we have that there is a rational number $\ell \in \Q$ such that
\[
\gamma = \ell[s] \; \text{ in }\; \CH_0(\X_s/\kappa),
\]
where $s$ denotes the closed point of $S$. Let $m \in \Z$ such that $m \cdot \ell$ is an integer. We have
\[
\frac{1}{m}\on{div}(\varpi^{m \cdot \ell }) = \frac{1}{m}\on{lenght}_R\left(R/\varpi^{m \cdot \ell}\right)\cdot [s] =  \ell[s].
\]
Hence $[p] = \iota_*(\gamma) = 0$ in $\CH_0(\X_s/\kappa)$. 
\end{proof} 

We will see later in Proposition \ref{prop:ch-zero} that if $\X$ is a proper regular \emph{toric} scheme over $S$ then the Chow group of zero cycles of the special fiber $\X_s$ has rank $1$ (see Corollary \ref{cor:special-toric}).

\subsubsection*{The specialization map}
Let $\iota \colon \X \hookrightarrow \mathfrak{Y}$ be a regular codimension $d$-embedding of $S$-schemes (this means that $\iota$ is a closed immersion, defined locally by a regular sequence of length $d$) and consider a commutative diagram of $S$-schemes 
\begin{figure}[H]
\begin{center}
    \begin{tikzpicture}
      \matrix[dmatrix] (m)
      {
        \X' & \mathfrak{Y}'\\
       \X & \mathfrak{Y} \\
      };
      \draw[->] (m-1-1) to  (m-1-2);
      \draw[ ->] (m-1-1)--(m-2-1);
      \draw[->] (m-1-2)-- (m-2-2);
      \draw[->] (m-2-1) to node[above]{$\iota$} (m-2-2);
      
     \end{tikzpicture}
     \end{center}
     \end{figure} 
Then one has a \emph{refined Gysin map}
\[
\iota^{!}\colon \CH_*(\mathfrak{Y}'/S) \longrightarrow \CH_*\left(\X'/S\right)
\]
with 
\[
\iota^{!}\left(\CH_k(\mathfrak{Y}'/S)\right) \subseteq \CH_{k-d}\left(\X'/S\right).
\]
For details and the construction of the refined Gysin map we refer to \cite[Chapter 6]{fulint}. 
As special case is the following: if $\ca{V}$ is a closed subscheme of $\mathfrak{Y}$ such that $\ca{V} \times_{\mathfrak{Y}}\X \hookrightarrow \ca{V}$ is again a regular embedding of codimension $d$, then we have that
\begin{equation}\label{eq:gysin}
\iota^{!}\left([\ca{V}]\right) = \left[\ca{V} \times_{\mathfrak{Y}} \X \right].
\end{equation}

If we set $\X = \X' = \X_s$, $\mathfrak{Y} = \mathfrak{Y}' = \X$ and $\iota \colon \X_s \hookrightarrow \X$, then we have a refined Gysin morphism 
\[
\iota^! \colon \CH_k(\X/S) \to \CH_{k-1}(\X_s/S)
\]
 and the composition 
\[
\iota^{!} \circ {\iota}_* \colon \CH_k\left(\X_s/S\right) \to \CH_{k-1}\left(\X_s/S\right)
\]
 is the zero map \cite[Theorem 6.3]{fulint}. Hence by \eqref{eq:localization}, one obtains a uniquely defined \emph{specialization map} 
\[
\sp \colon CH_{k}\left(\X_{\eta}/S\right) \longrightarrow \CH_{k-1}\left(\X_s/S\right)
\]
such that for every irreducible subset $\ca V$ of $\X$ we have 
\[
\sp\left(\left[\ca V \cap \X_{\eta}\right]\right) = \iota^{!}\left([\ca V]\right).
\]
\begin{rem}
The map specialization map can be reinterpreted as a map 
\[
\sp\colon CH_{k-1}\left(\X_{\eta}/K\right) \longrightarrow \CH_{k-1}\left(\X_s/\kappa\right),
\]
whence the name. 
\end{rem} 

One can show that the specialization map is compatible with proper push-forward and flat pull-backs (see \cite[Proposition 20.3]{fulint} for the precise statements).

For example, if $V$ is an irreducible closed subset of $X_{\eta}$ and $\overline{V}$ is the closure of $V$ in $\X$ then the closed fiber $\overline{V}_{\kappa} = \overline{V} \times_{\X}\X_s \hookrightarrow \overline{V}$ is a regular embedding of codimension one, and $\sp\left([V]\right) = \left[\overline{V}_{\kappa}\right]$. In this case, the specialization map is induced by a map on the level of cycles.

\subsection{Chow groups of toric schemes}\label{sec:chow-hom}

We now consider the Chow (homology) groups of toric schemes over $S$. For this, we fix a complete smooth SCR polyhedral complex $\Pi$ in $N_{\R}$ and let $\X = \X_{\Pi}$ the corresponding proper, regular toric scheme over $S$ of $S$-absolute dimension $n+1$.

In this section we state and prove our main results. 

\begin{theorem}\label{th:inv-cycles}

Let $k$ be an integer. Then $\CH_k(\X/S)$ is generated by torus invariant horizontal and vertical cycles of $S$-absolute dimension $k$. A collection of generators is given by the family of horizontal cycles $\left[V(\sigma)\right]$ for $\sigma \in \Sigma(n-k+1)$ and of vertical cycles $\left[V(\Lambda)\right]$ for~$\Lambda \in \Pi(n-k)$. 
\end{theorem}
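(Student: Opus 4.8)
The plan is to use the localization sequence \eqref{eq:localization}
\[
\CH_k(\X_s/S) \xrightarrow{\iota_*} \CH_k(\X/S) \xrightarrow{j^*} \CH_k(\X_{\eta}/S) \to 0
\]
to split the statement into a contribution from the generic fiber, which will produce the horizontal generators, and a contribution from the special fiber, which will produce the vertical generators. By right-exactness of this sequence it suffices to exhibit generators of $\CH_k(\X_{\eta}/S)$ that lift to horizontal cycles $[V(\sigma)]$, and then to show that the image $\iota_*\CH_k(\X_s/S)$ is generated by the vertical cycles $[V(\Lambda)]$.

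For the generic fiber, I would use Remark~\ref{rem:dimension} to identify $\CH_k(\X_{\eta}/S) = \CH_{k-1}(X_{\Sigma}/K)$, where $X_{\Sigma} = \X_{\eta}$ is a genuine toric variety over the field $K$ with fan $\Sigma$. By the classical result for toric varieties over a field (\cite{fmss}, \cite{Brion-equi}, \cite{FS}) this group is generated by the classes of orbit closures of dimension $k-1$, that is by those $\sigma$ with $n-\dim(\sigma) = k-1$, i.e.\ $\sigma \in \Sigma(n-k+1)$. Since $j^*$ is flat restriction to the generic fiber and each $V(\sigma)$ is horizontal with $V(\sigma)\cap\X_{\eta}$ equal to the corresponding orbit closure in $X_{\Sigma}$, one has $j^*[V(\sigma)] = [V(\sigma)\cap\X_{\eta}]$. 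Hence the horizontal cycles $[V(\sigma)]$, $\sigma \in \Sigma(n-k+1)$, surject onto a generating set of $\CH_k(\X_{\eta}/S)$, and by exactness $\CH_k(\X/S)$ is generated by these together with $\on{im}(\iota_*)$.

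It then remains to generate $\iota_*\CH_k(\X_s/S)$ by vertical cycles. Using $\CH_k(\X_s/S) = \CH_k(\X_s/\kappa)$, I must show that $\CH_k(\X_s/\kappa)$ is generated by the classes $[V(\Lambda)]$ for $\Lambda \in \Pi(n-k)$; pushing forward along $\iota$ then completes the argument. Although $\X_s$ is in general reducible and non-reduced, it carries a $\T_{\kappa}$-action whose orbits are indexed by $\Pi$ (Section~\ref{subsec:orbits}): the orbit $O(\Lambda)$ is isomorphic to $\mathbb{G}_{m,\kappa}^{\,n-\dim(\Lambda)}$ with closure $V(\Lambda)$, and the top-dimensional orbits $O(v)$, $v \in \Pi(0)$, are the big tori of the irreducible components $X_{\Pi(v)}$. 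I would run the standard orbit-stratification argument: setting $U = \bigsqcup_{v\in\Pi(0)} O(v)$ (an open union of dense orbits) with closed complement $Z = \X_s \setminus U$, the localization sequence $\CH_k(Z)\to \CH_k(\X_s)\to \CH_k(U)\to 0$, combined with the vanishing $\CH_k(\mathbb{G}_m^{\,d}) = 0$ for $k<d$ and $\CH_d(\mathbb{G}_m^{\,d}) = \Q[\mathbb{G}_m^{\,d}]$, reduces generation either to the top classes $[V(v)]$ (when $k=n$) or to $Z$, which is again a union of toric varieties with $\T_{\kappa}$-orbits $O(\Lambda)$, $\dim(\Lambda)\geq 1$. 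Induction on dimension then shows $\CH_k(\X_s/\kappa)$ is generated by the $[V(\Lambda)]$ with $\dim V(\Lambda) = n-\dim(\Lambda) = k$, i.e.\ $\Lambda \in \Pi(n-k)$, as desired.

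The main obstacle is this special-fiber step. Because $\X_s$ is not itself a toric variety, one cannot directly invoke the classical toric statement and must instead carry out the orbit-stratification/localization induction on the reducible scheme $\X_s$, verifying that the orbit closures arising in the intermediate strata are precisely the $V(\Lambda)$ and that the passage between $S$-absolute dimension and $\kappa$-dimension remains consistent throughout. By contrast, the generic-fiber step and all dimension counts are routine once the cited classical results are in hand.
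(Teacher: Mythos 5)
Your proposal is correct and follows essentially the same route as the paper: the localization sequence \eqref{eq:localization} to separate the generic-fiber contribution (handled by the classical toric result over $K$, giving the $[V(\sigma)]$ for $\sigma\in\Sigma(n-k+1)$) from the special-fiber contribution, and then an orbit-stratification induction on the reducible scheme $\X_s$ to show $\CH_k(\X_s/\kappa)$ is generated by the $[V(\Lambda)]$, $\Lambda\in\Pi(n-k)$. The only cosmetic difference is that you strip off the open dense orbits and induct on the closed complement, while the paper phrases the same induction via the increasing filtration $\X_{s,i}$ by orbits of dimension at most $i$.
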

\begin{proof}
Let $\iota \colon \X_s \to \X$ denote the closed regular immersion of codimension one of the special fiber, and let $j \colon \X_{\eta}\to \X$ the open immersion of the generic fiber.
Consider the localization sequence \eqref{eq:localization}
\begin{eqnarray*}
\begin{tikzcd}
\CH_k(\X_s/S)\arrow["{\iota}_*"]{r} &\CH_k(\X/S)\arrow["{j}^*"]{r} &\CH_{k}(\X_{\eta}/S)\arrow{r} &0
\end{tikzcd}
\end{eqnarray*}
Equivalently,  and with the identifications from Remark \ref{rem:dimension}, we may consider the exact sequence 
\begin{eqnarray}\label{eq:loc-toric}
\begin{tikzcd}
\CH_{k}(\X_s/\kappa)\arrow["{\iota}_*"]{r} &\CH_{k}(\X/S)\arrow["{j}^*"]{r} &\CH_{k-1}(\X_{\eta}/K)\arrow{r} &0.
\end{tikzcd}
\end{eqnarray}
Note that if the two extremes are finitely generated groups, so is the middle one, and a family of generators for $\CH_k(\X/S)$ is given by the images of the generators of $\CH_{k}(\mathcal{X}_s/\kappa)$ and by a choice of preimages of the generators of~$\CH_{k}(\mathcal{X}_\eta/K)$.

Now, horizontal cycles of $S$-absolute dimension $k$ are by definition closures in $\X$ of $(k-1)$-dimensional cycles in the generic fiber $\X_{\eta} \simeq X_{\Sigma}$ (seen as a toric variety over the field $K$), and as is well known in toric geometry \cite[Proposition 2.1 (a)]{FS} that these cycles  are generated by the torus invariant classes $V(\sigma)$ for $\sigma \in \Sigma(n-k+1)$.

On the other hand, it follows from the description of the torus orbits given in Section \ref{subsec:toric-schemes} that the special fiber $\X_{s}$ is a finite union of toric varieties over $\kappa$
\[
\X_{s} = \bigcup_{\Lambda \in \Pi} X_{\Pi(\Lambda)}
\]
 glued in a way that is compatible with the polyhedral structure in $\Pi$. We claim that for any $\ell \geq 0$ the group $\CH_{\ell}\left(\X_{s}/\kappa\right)$ is generated by torus orbit closures of $\kappa$-dimension $\ell$. This would prove the proposition. In order to see this, let $\X_{s,i}$ consist of orbits of dimension at most $i$, i.e.
 \[
 \X_{s,i} = \bigcup_{\overset{\Lambda \in \Pi}{\codim(\Lambda) \leq i}}X_{\Pi(\Lambda)}.
 \]
 Then $\X_{s,i}$ is a closed subset of $\X_s$ and $\X_s$ admits the filtration 
 \[
 \X_s = \X_{s,n} \supseteq \X_{s,n-1} \supseteq \dotsc, \supseteq \X_{s,-1} = \emptyset.
 \]
 Then, by \cite[Proposition 1.8]{fulint}, for every $i$, we have an exact sequence 
 \begin{eqnarray}\label{eq:induction}
 \CH_{\ell}\left(\X_{s,i-1}/\kappa\right) \longrightarrow \CH_{\ell}\left(\X_{s,i}/\kappa\right) \longrightarrow \CH_{\ell}\left(\X_{s,i} \setminus \X_{s, i-1} /\kappa\right) \longrightarrow 0.
 \end{eqnarray}
 The claim then follows by induction on $i$. Indeed, the claim for the generators of the term in the left hand side of \eqref{eq:induction} follows by induction, and for the term on the right hand side, note that this is a disjoint union of toric varieties, for which the claim on the generators follows again from \cite[Proposition 2.1 (a)]{FS}.

\end{proof}

\begin{exa}\label{ex:ch-max}
 Consider the case $k = n+1$. Then $\CH_{n+1}(\X_s/ \kappa) = 0$ and by Equation \eqref{eq:loc-toric} we have 
 \[
\CH_{n+1}(\X/S) \simeq \CH_n(\X_{\eta}/K) \simeq \Q,
\]
generated by the closure of the dense torus $\T_K$ which corresponds to the zero cone $\{0\} \in \Sigma$. 
\end{exa}
\begin{exa}
The specialization map
\[
\on{sp} \colon \CH_k(\X_{\eta}/K) \longrightarrow  \CH_{k}(\X_s/\kappa).
\]
has the following combinatorial description in the toric case. Note that both the Gysin morphism $\iota^!$ and the proper pushforward $\iota_*$ preserve the stratification by torus orbits, hence also the specialization map. In other words, $\sp$ is \emph{toroidal}, induced by a map of polyhedral complexes, i.\,e.\,it maps a cone $\sigma$ to a polyhedron $\Lambda$ with $\rec(\Lambda) = \sigma$. 

`We obtain that $\sp$ is given on the level of cycles by 
\[
[V(\sigma)] \longmapsto \sum_{\Lambda} \on{mult}(\Lambda)[V(\Lambda)],
\]
where $\sigma \in \Sigma(n-k)$ and the sum is over all $\Lambda \in \Pi(n-k)$ with $\on{rec}(\Lambda) = \sigma$.  Here, the term $\on{mult}(\Lambda)$ is the \emph{multiplicity} associated to a polyhedron $\Lambda \in \Pi$ \cite[Def. 3.5.11 and Lemma 3.5.12]{BPS}, defined by 
\[
\on{mult}(\Lambda) \coloneqq \min\left\{n \geq 1 \; | \; \exists p \in \on{aff}(\Lambda) \text{ s.~t. }np \in N\right\},
\]
where $\on{aff}(\Lambda)$ denotes the affine span of $\Lambda$.  This multiplicity agrees with the intersection multiplicity of $V(\Lambda)$ in $\CH_k(\X_s/\kappa)$ (see \cite[Proposition 3.7.8]{BPS}).
\end{exa}
By construction of the toric scheme $\X$, rational functions on $\X$ are generated by elements $u\varpi^{\ell} \chi^m$, where $(m, \ell) \in M \times \Z$ and $u$ is a unit in $R$. The following lemma computes the divisor of such a generating element in terms of invariant divisors. We write $\omega$ for the primitive vector spanning a ray $\tau_{\omega} \in c(\Pi)(1)$ and denote by $D_{\omega}$ the corresponding invariant divisor. 
\begin{lemma}\label{lem:rel}
Let notations be as above and let $f $ be the rational function $u\varpi^{\ell} \chi^m$. Then 
\[
\on{div}(f) =\sum_{\omega}b_{\omega}D_{\omega},
\]
where 
\[
b_{\omega} = \begin{cases} \langle m , v\rangle_N \; & \text{ if }\omega= (v, 0) \in N_{\R}\times\{0\}, \\
\langle m , v\rangle_N + \ell \; & \text{ if } \omega = (v,1) \text{ for a vertex $v \in \Pi(0)$}. \end{cases}
\]
Here, $\langle \cdot, \cdot \rangle_N$ denotes the pairing between $M_{\R}$ and $N_{\R}$.  In other words, we have 
\[
\on{div}(f) =\sum_{\tau \in \Sigma(1)} \left\langle m,v_{\tau}\right\rangle_N V(\tau) + \sum_{v \in \Pi(0)}\left(\langle m,v\rangle + \ell\right) V(v) \in \Div(\X).
\]
\end{lemma}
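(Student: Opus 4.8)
The plan is to compute $\div(f)$ directly as a sum of orders of vanishing along the torus invariant prime divisors of $\X$, reducing everything to the standard toric valuation formula once the defining relation $\chi^{(0,1)} = \varpi$ has been unwound. First I would rewrite $f$ as a single character: since $\varpi = \chi^{(0,1)}$ on each affine chart, we have $f = u\,\varpi^{\ell}\chi^{m} = u\,\chi^{(m,\ell)}$ with $(m,\ell) \in \widetilde{M} = M \oplus \Z$. As $u \in R^{\times}$ is a unit in $R$, it is a global unit on $\X$, so $\div(u) = 0$ and $\div(f) = \div(\chi^{(m,\ell)})$. Being the divisor of a character, $\div(f)$ is $\T_S$-invariant, hence supported on the invariant prime divisors $D_{\omega}$ indexed by the rays $\tau_{\omega} \in c(\Pi)(1)$; thus $\div(f) = \sum_{\omega} b_{\omega} D_{\omega}$ with $b_{\omega} = \ord_{D_{\omega}}\!\left(\chi^{(m,\ell)}\right)$, and it remains to identify each coefficient.

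Next I would compute $b_{\omega}$ on the relevant affine chart. Fix a ray $\tau_{\omega}$ with primitive generator $\omega \in \widetilde{N}$. Since $\Pi$ is regular, $c(\Pi)$ is regular, so $\omega$ extends to a $\Z$-basis $\omega = e_1, e_2, \dotsc, e_{n+1}$ of $\widetilde{N}$; let $e_1^{*}, \dotsc, e_{n+1}^{*}$ be the dual basis of $\widetilde{M}$ and put $x_i = \chi^{e_i^{*}}$. On the smooth chart $\X_{\tau_{\omega}} = \Spec\!\left(R[\widetilde{M}_{\tau_{\omega}}]/(\chi^{(0,1)}-\varpi)\right)$ the divisor $D_{\omega}$ is cut out by $x_1$, and the local ring of $\X$ at its generic point is a DVR with uniformizer $x_1$. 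Writing $\chi^{\tilde m} = \prod_i x_i^{\langle \tilde m, e_i\rangle}$ for $\tilde m \in \widetilde{M}$ then yields the formula $\ord_{D_{\omega}}\!\left(\chi^{\tilde m}\right) = \langle \tilde m, \omega\rangle$, now valid over $S$ and not merely over a field.

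Finally I would split into the two types of rays of $c(\Pi)$ and evaluate the pairing $\langle(m,\ell),\omega\rangle$. If $\tau_{\omega} \subseteq N_{\R}\times\{0\}$, then $\omega = (v_{\tau},0)$ where $v_{\tau}$ is the primitive generator of the ray $\tau = \tau_{\omega} \in \Sigma(1)$, whence $b_{\omega} = \langle(m,\ell),(v_{\tau},0)\rangle = \langle m, v_{\tau}\rangle_N$; the $\varpi$-term contributes nothing, consistent with $V(\tau)$ being horizontal so that $\varpi$ is invertible at its generic point. If instead $\omega = (v,1)$ arises from a vertex $v \in \Pi(0)$, then $b_{\omega} = \langle(m,\ell),(v,1)\rangle = \langle m, v\rangle_N + \ell$, where the summand $\ell$ records that $\ord_{V(v)}(\varpi) = \langle(0,1),(v,1)\rangle = 1$. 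Collecting the two families of coefficients gives precisely the stated expression for $\div(f)$.

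The main obstacle I anticipate is the second step: justifying the valuation formula $\ord_{D_{\omega}}(\chi^{\tilde m}) = \langle \tilde m, \omega\rangle$ in the arithmetic setting over $S$ rather than over a field. Concretely, one must check on the regular chart that $D_{\omega}$ is a principal prime divisor cut out by the single coordinate $x_1$ and that the corresponding local ring is a DVR with $x_1$ as uniformizer, so that orders of vanishing of monomials read off from exponents exactly as in the classical toric case. A related point requiring care is that for the vertical rays one takes the generator $(v,1)$ with the normalization under which no multiplicity factor $\mult(v)$ intervenes, matching the convention adopted for the cycles $V(v)$.
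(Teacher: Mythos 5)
Your proposal is correct and follows essentially the same route as the paper: the paper's proof likewise reduces to the formula $\ord_{D_{\omega}}(f)=\langle(m,\ell),\omega\rangle$ for the pairing between $\widetilde{M}_{\R}$ and $\widetilde{N}_{\R}$ and then splits into the horizontal and vertical cases. You simply supply more detail (the rewriting $\varpi=\chi^{(0,1)}$, the local DVR computation on a regular chart, and the remark about the normalization $\omega=(v,1)$ versus multiplicities), all of which is consistent with the paper's one-line argument.
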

\begin{proof}
The vanishing order of $f$ on $D_{\omega}$ is $\left(\left\langle\left(m, \ell\right),\omega \right\rangle\right)$, where the pairing is the one between $M_{\R} \times \R$ and $N_{\R} \times \R$. Then the lemma follows from distinguishing the two cases.
\end{proof}
By the formula in Theorem \ref{th:div-lin} we thus obtain the following corollary.

We are now ready to give an explicit combinatorial presentation of the Chow groups $\CH_k(\X/S)$ in terms of generators and relations for a given integer $k$. For $\tau \in \Sigma$ we write $M(\tau)$ for the dual $N(\tau)^{\vee}$. Consider the sequence 

\begin{align}\label{eq:chow}
&\bigoplus_{\tau \in \Sigma(n-k)}\left(M(\tau)_{\Q} \times \Q\right)\bigoplus \bigoplus_{\Lambda \in \Pi(n-k-1)}\widetilde{M}(\Lambda)_{\Q} \xrightarrow{\alpha} \\
& \xrightarrow{\alpha} \Q^{|\Sigma(n-k+1)|} \bigoplus \Q^{|\Pi(n-k)|} \xrightarrow{\beta} \CH_{k}(\X/S) \to 0 \nonumber,
\end{align}

with maps  $\alpha$ and $\beta$ given as follows. 
Let $\tau \in \Sigma(n-k)$. Recall that $V(\tau)$ corresponds to a toric $S$-scheme of $S$-absolute dimension $k+1$. Then $(m,\ell) \in M(\tau)_{\Q} \times \Q$ is mapped by $\alpha$ to 
\[
\sum_{\substack{\sigma \in \Sigma(n-k+1)\\ \tau \prec \sigma}}\left\langle m, v_{\sigma/\tau}\right\rangle V(\sigma) + \sum_{\substack{\Lambda \in \Pi(n-k)\\ \rec(\Lambda)= \tau}} \left(\left\langle m, v_{\Lambda}\right\rangle +\ell \right) V(\Lambda).
\]
 On the other hand, if $\Lambda \in \Pi(n-k-1)$, then $\Lambda$ corresponds to a vertical cycle $V(\Lambda)$ of $S$-absolute dimension $k+1$. This is isomorphic to a toric variety $X_{\Pi(\Lambda),\kappa}$ of $\kappa$-dimension $k+1$ whose torus has lattice of one-parameter subgroups $\widetilde{M}(\Lambda)$. Then $\alpha$ sends $m \in \widetilde{M}(\Lambda)_{\Q}$ to 
 \[
 \sum_{\substack{\Lambda' \in \Pi(n-k)\\ \Lambda \prec \Lambda'}} \left\langle m, v_{\Lambda'/ \Lambda} \right\rangle V(\Lambda').
 \]
 The map $\beta$ is just the canonical projection mapping a cycle corresponding to $\sigma \in \Sigma(n-k+1)$ or to $\Lambda' \in \Pi(n-k)$ to its class.
 \begin{theorem}\label{th:exact}
The sequence \eqref{eq:chow} is exact.
\end{theorem}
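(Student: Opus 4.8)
The plan is to unpack the single statement of exactness into its three constituent parts: surjectivity of $\beta$, the inclusion $\Im\alpha\subseteq\ker\beta$, and the reverse inclusion $\ker\beta\subseteq\Im\alpha$. Surjectivity is exactly Theorem \ref{th:inv-cycles}, so nothing new is needed there. For $\beta\circ\alpha=0$ I would recognise each of the three families of generators of the source of $\alpha$ as the divisor of a monomial rational function pushed forward along the closed immersion of an orbit closure. Indeed, for $\tau\in\Sigma(n-k)$ the cycle $V(\tau)\cong\X_{\Pi(\tau)}$ is itself a toric $S$-scheme, and Lemma \ref{lem:rel} applied to it identifies the image of $(m,\ell)$ with the pushforward of $\div(\varpi^{\ell}\chi^{m})$; for $\Lambda\in\Pi(n-k-1)$ the cycle $V(\Lambda)\cong X_{\Pi(\Lambda),\kappa}$ is a toric variety over $\kappa$ and the image of $m\in\widetilde M(\Lambda)_\Q$ is the pushforward of $\div(\chi^{m})$ given by the usual toric divisor formula. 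As proper pushforward preserves rational equivalence and these divisors bound, the easy inclusion follows.

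The heart of the matter is $\ker\beta\subseteq\Im\alpha$, and I would attack it by a chase along the localization sequence \eqref{eq:loc-toric}. Write $F_h=\Q^{|\Sigma(n-k+1)|}$, $F_v=\Q^{|\Pi(n-k)|}$, and decompose $\alpha$ into its horizontal part $\alpha_\Sigma$ (the Fulton--Sturmfels relation map on $\bigoplus_\tau M(\tau)_\Q$), the vertical contribution $\gamma$ of the same summands, the $\ell$-contribution $\delta$ of the $\bigoplus_\tau\Q$ summands, and the special-fibre map $\alpha_s$ on $\bigoplus_\Lambda\widetilde M(\Lambda)_\Q$. Given $\xi=(x_h,x_v)\in\ker\beta$, applying $j^{*}$ gives $\pi_\Sigma(x_h)=0$ in $\CH_{k-1}(X_\Sigma/K)$, so the Fulton--Sturmfels presentation of the generic fibre yields $x_h=\alpha_\Sigma(r)$. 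Replacing $\xi$ by $\xi-\alpha(r,0,0)$ leaves a purely vertical class $(0,y_v)$ still in $\ker\beta$; since $\beta|_{F_v}=\iota_*\circ p$ with $p\colon F_v\twoheadrightarrow\CH_k(\X_s/\kappa)$, this says $p(y_v)\in\ker\iota_*$.

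To finish I would invoke two inputs. First, a presentation $\bigoplus_{\Lambda\in\Pi(n-k-1)}\widetilde M(\Lambda)_\Q\xrightarrow{\alpha_s}F_v\to\CH_k(\X_s/\kappa)\to 0$ of the Chow group of the (reducible) toric special fibre, which I would obtain exactly as in the proof of Theorem \ref{th:inv-cycles}, filtering $\X_s$ by orbit dimension and applying the single-variety Fulton--Sturmfels presentation on each stratum; this gives $\ker p=\Im\alpha_s$. Second, the identification $\ker\bigl(\iota_*\colon\CH_k(\X_s/\kappa)\to\CH_k(\X/S)\bigr)=\Im\bigl(\sp\colon\CH_{k+1}(\X_\eta/S)=\CH_k(X_\Sigma/K)\to\CH_k(\X_s/\kappa)\bigr)$, i.e.\ that the localization sequence extends one step to the left with connecting map $\sp$. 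Granting this, $\Im\sp$ is generated by the classes $\sp([V(\tau)])=[\div(\varpi|_{V(\tau)})]$ for $\tau\in\Sigma(n-k)$, which are precisely the special-fibre cycles produced by the $\delta$-relations $\alpha(0,\ell,0)$. Hence $p(y_v)=p(\delta(\ell))$ for some $\ell$, so $y_v-\delta(\ell)\in\ker p=\Im\alpha_s$, and therefore $\xi=\alpha(r,\ell,s)\in\Im\alpha$.

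I expect the kernel computation $\ker\iota_*=\Im\sp$ to be the main obstacle. The inclusion $\Im\sp\subseteq\ker\iota_*$ is the easy cycle-level identity $\iota_*\sp([V])=[\div(\varpi|_{\overline V})]=0$, which uses that $\X_s=\div(\varpi)$ is principal with trivial normal bundle (so that $\iota^{!}\iota_*=0$ and $\sp$ is defined). The reverse inclusion $\ker\iota_*\subseteq\Im\sp$ is the substantive point: a vertical $k$-cycle that bounds on $\X$ must, after discarding the rational equivalences already supported on $\X_s$, be the sum of the vertical parts of divisors of functions on horizontal $(k+1)$-dimensional subvarieties whose horizontal parts cancel, and one must show that this vertical residue is the specialization of a class on $\X_\eta$. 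I would establish this through the cycle-theoretic construction of $\sp$ via the refined Gysin map (Fulton, Chapters 6 and 20). The delicate bookkeeping here is the multiplicity of the components of the possibly non-reduced special fibre, which is where the factors $\mult(\Lambda)$ enter and must be reconciled with the coefficients appearing in the definition of $\delta$ and of $\sp$.
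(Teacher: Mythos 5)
Your first two steps coincide with the paper's: surjectivity of $\beta$ is Theorem \ref{th:inv-cycles}, and $\beta\circ\alpha=0$ follows from Lemma \ref{lem:rel} together with the usual toric divisor formula on the orbit closures $V(\tau)\simeq\X_{\Pi(\tau)}$ and $V(\Lambda)\simeq X_{\Pi(\Lambda),\kappa}$. For the hard inclusion $\ker\beta\subseteq\Im\alpha$ you take a genuinely different route from the paper, which instead forms the groups $\CH_k^{\T_S}$ of invariant cycles modulo divisors of eigenfunctions on invariant subvarieties, places the invariant and the ordinary localization sequences as the rows of a commutative diagram, and concludes that $\CH_k^{\T_S}(\X/S)\to\CH_k(\X/S)$ is an isomorphism because the corresponding maps for $\X_s$ and $\X_\eta$ are isomorphisms by \cite[Theorem 1]{fmss}. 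Your diagram chase is structured correctly and would close the argument \emph{if} its two inputs held, but the second input is where the proposal breaks down.

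The genuine gap is the claimed identity $\ker\bigl(\iota_*\colon\CH_k(\X_s/\kappa)\to\CH_k(\X/S)\bigr)=\Im(\sp)$. The refined Gysin construction only gives the easy inclusion $\Im(\sp)\subseteq\ker(\iota_*)$, via $\iota_*\iota^!=c_1\bigl(\mathcal{O}_{\X}(\X_s)\bigr)\cap(-)=0$. The reverse inclusion is the assertion that the localization sequence \eqref{eq:localization}, which is only right exact, extends one step to the left with $\sp$ as connecting map; this is not a general fact about schemes over a DVR (the natural connecting term lives in a higher Chow group of $\X_\eta$, not in $\CH_{k+1}(\X_\eta/S)$), it is nowhere established in the paper, and Fulton's Chapters 6 and 20 do not supply it. Worse, in the toric setting this identity is, modulo the presentations of $\CH_*(\X_\eta/K)$ and $\CH_*(\X_s/\kappa)$ that you also assume, essentially equivalent to the theorem being proved: an element of $\ker\iota_*$ is exactly the vertical residue $\gamma(r)+\delta(\ell)+\alpha_s(s)$ of a relation whose horizontal part cancels, and showing that $\gamma(\ker\alpha_\Sigma)$ lands in $\Im(p\circ\delta)+\Im\alpha_s$ is the content of the missing inclusion. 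So invoking it is circular. A secondary issue: your presentation $\ker p=\Im\alpha_s$ of $\CH_k(\X_s/\kappa)$ does not follow from the orbit filtration used in Theorem \ref{th:inv-cycles}, which only produces generators (extracting relations from a filtration again requires a left exactness you do not have); it does follow from \cite[Theorem 1]{fmss} applied to the $\T_\kappa$-scheme $\X_s$, which is precisely the input the paper uses, and replacing your kernel computation by the paper's comparison of the invariant and ordinary localization sequences is the way to repair the argument.
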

\begin{proof}
Surjectivity follows from Theorem \ref{th:inv-cycles}. On the other hand, by Lemma~\ref{lem:rel}, the maps are given by choosing an invariant subvariety $\mathfrak{Z}$ of $\X$ of $S$-absolute dimension $k+1$, choosing a rational function on $\mathfrak{Z}$ and taking its divisor. Hence, by definition, these will give relations in $\CH_{k}(\X/S)$ and hence the composition $\beta \circ \alpha$ is zero. It remains to show that the kernel of $\beta$ is contained in the image of $\alpha$. 

For this, we will show that all of the relations come from divisors on $\T_S$-invariant subvarieties. Let $Z_k^{\T_S}(\X)$ denote the group of invariant cycles on $\X$ and let $R_k^{\T_S}(\X)$ the subgroup generated by divisors of invariant rational functions on $\T_S$-invariant $(k+1)$-subvarieties of $\X$. Set $\CH^{\T_S}_k(\X/S) = Z_k^{\T_S}(\X)/R_k^{\T_S}(\X)$. We claim that the natural map 
\begin{eqnarray}\label{eq:inv}
\CH^{\T_S}_k(\X/S) \to \CH_k(\X/S)
\end{eqnarray}
is an isomorphism. Note that this would imply that all relations come from divisors on invariant subvarieties and hence this would imply the theorem. 

To prove the claim, consider the following commutative diagram with exact rows
\begin{figure}[H]
\begin{center}
    \begin{tikzpicture}
      \matrix[dmatrix] (m)
      {
        \CH_k^{\T_S}(\X_s/S) & \CH_k^{\T_S}(\X/S)& \CH_k^{\T_S}(\X_{\eta}/S)& 0  \\
         \CH_k(\X_s/S) & \CH_k(\X/S)& \CH_k(\X_{\eta}/S)& 0 \\
      };
      \draw[->] (m-1-1) to  (m-1-2);
       \draw[->] (m-1-2) to (m-1-3);
         \draw[->] (m-1-3) to  (m-1-4);
       \draw[->] (m-2-1) to  (m-2-2);
        \draw[->] (m-2-2) to (m-2-3);
         \draw[->] (m-2-3) to  (m-2-4);
      \draw[->] (m-1-1)-- (m-2-1);
      \draw[->] (m-1-2)-- (m-2-2);
      \draw[->] (m-1-3)-- (m-2-3);
      
     \end{tikzpicture}
     \end{center}
     \end{figure} 
The exactness of the bottom row is the localization sequence in \ref{eq:localization}. The exactness of the top row is the invariant version of the localization sequence. 

We have that $\CH^{\T_S}_k\left(\X_{\eta}/S\right) \simeq \CH^{\T_K}_{k-1}\left(\X_{\eta}/K\right)$ and $\CH^{\T_S}_k\left(\X_{s}/S\right) \simeq \CH^{\T_{\kappa}}_k\left(\X_{s}/\kappa\right)$. 
Applying \cite[Theorem 1]{fmss} to $\X_{\eta}$ and $\X_{s}$ endowed with the actions of $\T_K$ and $\T_{\kappa}$, respectively,
we obtain that $\CH^{\T_K}_{k-1}\left(\X_{\eta}/K\right) \simeq \CH_{k-1}\left(\X_{\eta}/K\right)$ and 
$\CH^{\T_{\kappa}}_k\left(\X_{s}/\kappa\right) \simeq \CH_k\left(\X_{s}/\kappa\right)$. 
Hence,  both the left and the right vertical maps of the above diagram are isomorphisms. 

Now, by \cite[Section 8]{gillet}, one has a description of the left part of the localization sequence, that is, there is an exact sequence 
\begin{eqnarray*}
\begin{tikzcd}
\CH^{n+2-k,n+1-k}(X) \arrow["{\div_{\X}}"]{r} &\CH_k(\X_s/S)\arrow["{\iota}_*"]{r} &\CH_k(\X/S)\arrow["{j}^*"]{r} &\CH_{k}(\X_{\eta}/S)\arrow{r} &0,
\end{tikzcd}
\end{eqnarray*}
where $\CH^{n+2-k,n+1-k}(\X_{\eta}) $ is a cohomology group of $X$ defined in the following way: set $p = n+2-k$. We write $X^{(p)}$ for points of codimension $p$. Then 
\[
\CH^{p,p-1}(X) \coloneqq \frac{\on{Ker}\left(d \colon \bigsqcup_{x \in X^{(p-1)}}k(x)^* \longrightarrow \bigsqcup_{x \in X^{(p)}}\Z\right)}{ \on{Im}\left(d \colon \bigsqcup_{x \in X^{(p-2)}}K_2(k(x)) \longrightarrow \bigsqcup_{x \in X^{(p-1)}}k(x)^*\right)},
\]
where $d$ is the differential that takes a rational function on a subvariety of codimension $p-1$ to its divisor, viewed as a codimension $p$ cycle, and  $K_2(k(x))$ denotes the algebraic $K$-group of Quillen (we refer to \cite{quillen} and \cite{gillet} for details). 

There is also an invariant version of this cohomology group, denoted by $\CH_{\T_K}^{p,p-1}(X)$, defined by considering distinguished points under the action of $\T_K$ of codimension $p$, that is, whose torus orbits give a stratification on $X$. Given the stratification of $X$ by torus orbits, there is a natural surjective map $\CH_{\T_K}^{p,p-1}(X) \to \CH^{p,p-1}(X)$. 

It follows that we have a diagram 

\begin{figure}[H]
\begin{center}
    \begin{tikzpicture}
      \matrix[dmatrix] (m)
      {
       \CH_{T_K}^{p,p-1}& \CH_k^{\T_S}(\X_s/S) & \CH_k^{\T_S}(\X/S)& \CH_k^{\T_S}(\X_{\eta}/S)& 0  \\
   \CH^{p,p-1}&      \CH_k(\X_s/S) & \CH_k(\X/S)& \CH_k(\X_{\eta}/S)& 0 \\
      };
      \draw[->] (m-1-1) to  (m-1-2);
       \draw[->] (m-1-2) to (m-1-3);
         \draw[->] (m-1-3) to  (m-1-4);
       \draw[->] (m-2-1) to  (m-2-2);
        \draw[->] (m-2-2) to (m-2-3);
         \draw[->] (m-2-3) to  (m-2-4);
      \draw[->] (m-1-1)-- node[right]{$\alpha_1$} (m-2-1);
      \draw[->] (m-1-2)--node[right]{$\alpha_2$} (m-2-2);
      \draw[->] (m-1-3)--node[right]{$\alpha_3$} (m-2-3);
      \draw[->] (m-1-4) -- (m-1-5);
      \draw[->] (m-2-4) -- (m-2-5);
      \draw[->] (m-1-4) --node[right]{$\alpha_4$} (m-2-4);
      
     \end{tikzpicture}
     \end{center}
     \end{figure} 
with exact rows, $\alpha_1$ an epimorphism and $\alpha_2$ and $\alpha_4$ isomorphisms. By the 5-Lemma, we get that 
$\alpha_3$ is also an isomorphism. This proves the claim.

\end{proof}

\subsection{Chow groups of the special fiber}\label{subsec:chow-special}

As before, $\X= \X_{\Pi}$ denotes a complete regular toric $S$-scheme corresponding to a complete, smooth SCR polyhedral complex $\Pi$ in $N_{R}$. We set $\Sigma = \rec(\Pi)$.

We saw already in the proof of Theorem \ref{th:inv-cycles} that the Chow groups of the special fiber are generated by vertical torus orbit closures. We now show how to calculate them explicitly in terms of an exact sequence.  For any integer $i$ write $\X_s^{[i]}$ for the disjoint union of $(i+1)$-fold intersections of irreducible components of $\X_s$. Then by \cite[Appendix A]{BGS} the Chow group of the special fiber has the following presentation for any integer $k$.
\begin{equation}\label{eq:chow-special}
CH_k\left(\X_s^{[1]}/\kappa\right)\longrightarrow CH_k\left(\X_s^{[0]}/\kappa \right) \longrightarrow CH_k\left(\X_s/\kappa \right) \longrightarrow 0.
\end{equation}
For toric schemes this translates into 
\[
\bigoplus_{\stackrel{\Lambda \in \Pi(1)}{\Lambda \text{ bdd}}}
CH_k\left(V(\Lambda)/\kappa \right) \longrightarrow \bigoplus_{v \in \Pi(0)}CH_k\left(V(v)/\kappa \right) \longrightarrow CH_k\left(\X_s /\kappa\right) \longrightarrow 0,
\]
where the term on the left is the direct sum of all \emph{bounded} one-dimensional polyhedra $\Lambda \in \Pi(1)$. This follows from the description of the special fiber in terms of $\Pi$ and regularity: if a polyhedron in $\Pi$ contains two vertices $v_1$ and $v_2$ then $v_1$ and $v_2$ are endpoints of an edge in $\Pi$.
\begin{rem}
There is an explicit combinatorial description of the maps in the above sequence which is given in \cite{bot-ara}. For the results of the present article, we only use the description of the map for $k = 0$ and $k=1$ in Proposition \ref{prop:ch-zero} and Example \ref{ex1}, respectively.
\end{rem}
\begin{exa}\label{ex:can}
Consider $\Pi = \Sigma$ the canonical model. Then since there are no bounded polyhedra of dimension $>0$ and since $\{0\}$ is the only vertex, here one easily sees that $CH_k\left(\X_s /\kappa \right) \simeq  \CH_k\left(X_{\Sigma}/\kappa\right)$.  In particular, for $k = 0$ we get
\[
CH_0\left(\X_s /\kappa \right) \simeq \Q.
\]
The isomorphisms $CH_k\left(\X_s /\kappa \right) \simeq  \CH_k\left(X_{\Sigma}/\kappa\right)$ in the case of the canonical model already appears in the work by Maillot (see \cite[Theorem~2.5.2]{maillot}).
\end{exa}
\begin{prop}\label{prop:ch-zero}
We have that 
\[
\CH_0(\X_s/\kappa) \simeq \Q
\]
 for any choice of regular, proper toric model $\X = \X_{\Pi}$. 
\end{prop}
\begin{proof}
Since the $\CH_0$ of a complete  toric variety over a field is isomorphic to $\Q$ (as follows from e.\,g.\,\cite[Proposition 2.1]{FS}), we have the exact sequence 
\begin{eqnarray*}
\begin{tikzcd}
\bigoplus_{\stackrel{\Lambda \in \Pi(1)}{\Lambda \text{ bdd}}} \Q\arrow["{a}"]{r} &\bigoplus_{v \in \Pi(0)}\Q\arrow{r} &CH_0\left(\X_s /\kappa\right)\arrow{r} &0.
\end{tikzcd}
\end{eqnarray*}
where $a$ is given in the following way (see \cite[Example 4.1]{bot-ara}).
 Let $e_{\Lambda}$ be a generator corresponding to a bounded $\Lambda \in \Pi(1)$ and let $e_{v_{\Lambda}^1}$ and $e_{v_{\Lambda}^2}$ be the generators corresponding to the endpoints of $\Lambda$. Then
\[
a\left(e_{\Lambda}\right) = e_{v_{\Lambda}^1} -  e_{v_{\Lambda}^2}.
\]
We have to show that 
\[
CH_0\left(\X_s /\kappa \right) = \left(\bigoplus_{v \in \Pi(0)}\Q\right)/\Im(a) \simeq \Q, 
\]
that is, we have to show that the difference of any two generators $e_{v_1}$ and $e_{v_2}$ of  $\bigoplus_{v \in \Pi(0)}\Q$ lies in the image of $a$. Since the $1$-skeleton of $\Pi$ is path connected we can find a sequence of segments $E_1, \dotsc, E_r \in \Pi(1)$ connecting $v_1$ and $v_2$. Then we have that 
\[
a\left(\sum_{i =1}^re_{E_i}\right) = e_{v_1}-e_{v_2}.
\]
\end{proof}
%
\begin{rem}\label{rem:kernel}
For the proof of Proposition \ref{prop:ch-zero} one can also argue in the following more abstract way. It follows from \cite[Theorem 2.2.1]{BGS} that for any integer $k$, the Kernels of $\iota^*\iota_*\colon \CH_{n-k}(\X_s) \to \CH^{k+1}(\X_s)$ for all choices of toric models are isomorphic. Hence, since for $k=0$ we get
\[
CH_0(\X_s) = \on{Ker}(\iota^*\iota_* \colon \CH_0(\X_s) \longrightarrow \CH^{n+1}(\X_s) = 0,
\]
and since, by the Example \ref{ex:can} above, $CH_0\left(\X_{\Sigma,s}/S\right) = \Q$ for the canonical model, we get that 
$\CH_0(\X_s/S) \simeq \Q$ for any toric model $\X$ over $S$. 
\end{rem}
From Proposition \ref{prop:ch-zero} and Lemma \ref{lem:zero} we obtain the following corollary.
\begin{cor}\label{cor:special-toric}
Let $\X$ be a be a proper regular toric scheme over $S$. Then $\CH_{0}(\X/S) \simeq 0$.
\end{cor}
\subsection{Examples}\label{subsec:exa}
As an application of Theorem \ref{th:exact} we calculate the Chow groups of some toric schemes.

\begin{exa}\label{exa:p1}
Consider toric models $\X= \X_{\Pi}$ of $\P^1$ with $\Pi$ and $\Sigma = \on{rec}(\Pi)$ as in the following figure. 
\begin{figure}[H]
\centering
\hfill

	\begin{tikzpicture}[scale = 1.3]
	\draw (-1.5,1) node{$\Pi$};
	\draw (6.5,1) node{$\Sigma$};
		\draw [->,thin] (-2,0) -- (2,0);
		\draw (0,0.3) node{$v_1$};
		\draw (1,0.3) node{$v_2$};
		\draw (2,0.3) node{$v_3$};
		\draw (3,0.3) node{$\dotsc$};
		\draw (4,0.3) node{$v_r$};
		\draw[-,blue, very thick] (-2,0) -- (5,0);
		\filldraw [blue] (0,0) circle (2pt);
		\filldraw [blue] (1,0) circle (2pt);
		\filldraw [blue] (2,0) circle (2pt);
		\filldraw [blue] (3,0) circle (2pt);
		\filldraw [blue] (4,0) circle (2pt);
		\draw[-,blue, very thick] (6,0) -- (8,0);
		\filldraw [blue] (7,0) circle (2pt);
		\draw (6.5,0) node[above]{$\tau_2$};
		\draw (7.5,0) node[above]{$\tau_1$};

	\end{tikzpicture}
\end{figure}

 We have $r = |\Pi(0)|$. We claim that
\[
\CH_k(\X/S) \simeq \begin{cases}  
\Q^r \; &\text{ if } \; k= 1, \\
\Q \; &\text{ if } \; k =2,\\
0 \; &\text{ otherwise.}
\end{cases}
\]
Moreover, for $k=1$ the relations between the one-cycles in $\CH_1(\X/S)$ are given by 
\[
\sum_{i = 1}^r V(v_i) = 0 \; \text{ and }\; V(\tau_1) - V(\tau_2) =- \sum_{i = 1}^{r-1}v_iV(v_{i +1}),
\]
 where $V(v_i)$ denotes the vertical cycle corresponding to the vertex $v_i \in \Pi(0)$ and where $V(\tau_i)$ denotes the horizontal cycle corresponding to the ray $\tau_i \in \Sigma(1)$. 
 
Hence a set of generators is given by $V(v_i)$ for $i = 1, \dotsc, r-1$ and by $V(\tau_1)$.

In order to prove the claim, consider first the case $k = 0$. We have an exact sequence
\[
\Q^{r+2} \longrightarrow \Q^{r+1} \longrightarrow \CH_{0}(\X/S) \longrightarrow 0,
\]
where the first map is given by 
\[
\left(m_1, \dotsc, m_{r+2}\right) \longmapsto \left(m_1-m_2, m_2-m_3, 
\dotsc, m_{r+1} - m_{r+2}\right).
\]
One sees that this map is surjective and hence $\CH_{0}(\X/S) \simeq 0$. This is a particular case of Corollary \ref{cor:special-toric}.  

Now we compute $\CH_1(\X/S)$. In this case we have the exact sequence 
\[
M_{\Q} \oplus \Q \longrightarrow \Q^{r+2} \longrightarrow \CH_1(\X/S) \longrightarrow 0
\]
where the first map is given by 
\[
(m,\ell) \longmapsto \left(m, -m, mv_1+\ell,mv_2+\ell, \dotsc, mv_r+\ell \right).
\]
This has image $\Q^2$ which implies $\CH_1(\X/S) \simeq \Q^r$.

The case $k=2$ follows from Example \ref{ex:ch-max}.

We can also compute the Chow groups of the special fiber using the combinatorial from of the exact sequence \eqref{eq:chow-special}. We claim that
\[
\CH_k(\X_s/S)\simeq\CH_k(\X_s/\kappa) \simeq \begin{cases} \Q \; &\text{ if } \; k= 0, \\ 
\Q^r \; &\text{ if } \; k= 1,\\
0 \; &\text{otherwise.}
\end{cases}
\] 
Note that all the cases $k \neq 0$ are clear and for $k=0$ this is a particular case of Proposition \ref{prop:ch-zero}. We can also calculate this explicitly. Note that the number of bounded rays is $ r-1$. Then the exact sequence \eqref{eq:chow-special} has the form
\[
\Q^{r-1} \longrightarrow \Q^r \longrightarrow CH_0\left(\X_s /\kappa\right) \longrightarrow 0,
\]
where the first map is given by 
\[
\left(m_1, \dotsc, m_{r-1}\right) \longmapsto \left( - m_1, m_1-m_2, \dotsc, m_{r-2}-m_{r-1},m_{r-1}\right).
\]
This map is injective and hence $CH_0(\X_s/\kappa) \simeq \Q$. 
\end{exa}

\begin{exa}\label{ex1}
We now consider the toric model $\X= \X_{\Pi}$ of $\P^2$ given by the following polyhedral complex 
\begin{figure}[H]
\centering
\hfill

	\begin{tikzpicture}[scale = 1.3]
\draw[-,blue, very thick] (0,0) -- (2,0);
\draw[-,blue, very thick] (0,0) -- (0,2);
\draw[-,blue, very thick] (0,1) -- (2,1);
\draw[-,blue, very thick] (0,0) -- (-1,-1);
\draw[-,blue, very thick] (0,1) -- (-1,0);
\filldraw [blue] (0,0) circle (2pt);
		\draw (0,0) node[below right]{$v_2$};
		\filldraw [blue] (0,1) circle (2pt);
		\draw (0,1) node[above right]{$v_1$};
		\draw (0,0.5) node[right]{$\gamma$};
		\draw (-1,1.5) node{$\Pi$};
		
		\draw[-,blue, very thick] (4,0) -- (5,0);
\draw[-,blue, very thick] (4,0) -- (4,1);
\draw[-,blue, very thick] (4,0) -- (3,-1);
\filldraw [blue] (4,0) circle (2pt);
\draw (4.5,0.5) node{$\sigma$};
\draw (4,0.5) node[left]{$\tau$};
		
		\draw (4,1.5) node{$\Sigma = \on{rec}(\Pi)$};

	\end{tikzpicture}
\end{figure}

We claim that 
\[
\CH_k(\X/S) \simeq \begin{cases}  
\Q^2 \; &\text{ if } \; k= 1,2 \\
\Q \; &\text{ if } \; k =3,\\
0 \; &\text{ otherwise.}
\end{cases}
\]
Moreover, we can give a set of generators. For $k = 1$ a set of generators is given by $V(\sigma)$, the horizontal cycle corresponding to the cone $\sigma \in \Sigma(2)$ appearing in the above Figure and the vertical cycle $V(\gamma)$ corresponding to $\gamma$.
For $k = 2$ a set of generators is given by $\{V(\tau), V(v_2)\}$, where $V(\tau)$ is the horizontal cycle corresponding to the cone $\tau \in \Sigma(1)$ and $V(v_2)$ is the irreducible component of the special fiber corresponding to the vertex $v_1 \in \Pi(0)$.

For the special fiber we claim that
\[
\CH_k(\X_s/\kappa) \simeq \begin{cases}  
\Q \; &\text{ if } \; k= 0 \\
\Q^2 \; &\text{ if } \; k =1,2\\
0 \; &\text{ otherwise.}
\end{cases}
\]
In order to prove the claim for $\CH_k(\X/S)$, it suffices to compute $\CH_k(\X/S)$ for $k=1,2$ since all other cases are clear from Corollary \ref{cor:special-toric} and Example \ref{ex:ch-max}. For $k =1$ we use the exact sequence in Theorem \ref{th:exact}. This has the form 
\[
\Q^{10} \xrightarrow{P} \Q^9 \to \CH_1(\X/S) \to 0,
\]
where, after an appropriate choice of generators, $P$ is the linear map given by 
\setcounter{MaxMatrixCols}{15}
\[
P = \begin{bmatrix} 1&0&-1&0&0&0&0&0&0&0\\ 0&0&1&0&-1&0&0&0&0&0\\-1&0&0&0&1&0&0&0&0&0\\0&0&0&0&0&0&0&-1&0&1\\0&0&0&1&0&0&1&0&0&0\\0&0&0&1&0&0&0&0&1&0\\0&0&0&0&0&1&0&0&-1&-1\\0&1&0&0&0&0&0&1&0&0 \\
0&0&0&0&0&0&0&0&0&0 \end{bmatrix}
\]
We have $\on{rank}(P) = 7$ and hence $\CH_1(\X/S) \simeq \Q^2$.

For $k=2$ we have
\[
\Q^3 \longrightarrow \Q^5 \longrightarrow \CH_2(\X/S) \longrightarrow 0
\]
where the first map is given by $(m_1,m_2,\ell) \mapsto (m_1, \langle m, v_1\rangle + \ell, \langle m, v_2\rangle + \ell)$, where $m = (m_1, m_2)$. 
This map is injective hence $\CH_2(\X/S) \simeq \Q^2$. 

The claim about the generators follows from the above computations.  

To prove the claim for the special fiber $\CH(\X_s/\kappa)$ we only need to consider the cases $k=0,1$ since the other cases are clear.

For $k=0$, from \eqref{eq:chow-special} we get an exact sequence 
\[
\Q \longrightarrow \Q^2 \longrightarrow \CH_0(\X_s/\kappa) \longrightarrow 0,
\]
where the first map is of the form $z \mapsto (z,-z)$. This map is injective and hence we get $\CH_0(\X_s/\kappa) \simeq \Q$, as also follows from Proposition \ref{prop:ch-zero}.  
For $k=1$, the exact sequence \eqref{eq:chow-special} is of the form 
\[
\Q \longrightarrow \Q^2 \oplus \Q \longrightarrow \CH_1(\X_s/\kappa) \longrightarrow 0.
\]
Indeed, in the second term, the $\Q^2$ corresponds to $\CH_1(V(v_1)/\kappa)$ which is the class group of the blow up of $\mathbb{P}^2$, and the $\Q$-term corresponds to $\CH_1\left(V(v_2)/\kappa\right)$ which is the class group of $\mathbb{P}^2$. The first map is then given by $q \mapsto (q,0,q)$ (which can be deduced from \cite[Example~4.1]{bot-ara}). Therefore we get $\CH_1(\X_s/\kappa) \simeq \Q^3/\Q \simeq \Q^2$. 

\end{exa}
\begin{exa}\label{ex2}
We now consider a toric model of $\P^2_{\on{Bl}}$, a blow up of $\P^2$ at a torus fixed point. Let $\X = \X_{\Pi}$ be the toric scheme corresponding to the polyhedral complex $\Pi$ in the following figure. 
\begin{figure}[H]
\centering
\hfill

	\begin{tikzpicture}[scale = 1.3]
\draw[-,blue, very thick] (0,0) -- (2,0);
\draw[-,blue, very thick] (0,0) -- (0,2);
\draw[-,blue, very thick] (1,1) -- (2,1);
\draw[-,blue, very thick] (0,0) -- (1,1);
\draw[-,blue, very thick] (0,0) -- (-1,-1);
\draw[-,blue, very thick] (1,0) -- (1,-1);
\draw[-,blue, very thick] (1,1) -- (1,2);
\draw[-,blue, very thick] (1,0) -- (1,1);
\draw[-,blue, very thick] (0,0) -- (0,-1);

\filldraw [blue] (0,0) circle (2pt);
		\draw (-0.5,-0.5) node[above left]{$\gamma_4$};
		\draw (1,-0.5) node[below right]{$\gamma_6$};
		\draw (0,-0.5) node[below right]{$\gamma_5$};
		\filldraw [blue] (1,0) circle (2pt);
	
		\filldraw [blue] (1,1) circle (2pt);
		
		\draw (0.5,0) node[below]{$\gamma_{10}$};
		\draw (1,0.5) node[right]{$\gamma_8$};
		\draw (0.5,0.5) node[left]{$\gamma_9$};
		\draw (1.5,1) node[above]{$\gamma_1$};
		\draw (1,1.5) node[above right]{$\gamma_2$};
		\draw (0,1) node[above left]{$\gamma_3$};
		\draw (1.5,0) node[below]{$\gamma_7$};
		\draw (0,0) node[left]{\footnotesize{$\omega_1$}};
		\draw (0.95,-0.05) node[above right]{\footnotesize{$\omega_2$}};
		\draw (0.95,0.95) node[above right]{\footnotesize{$\omega_3$}};

		\draw (-1,1.5) node{$\Pi$};
		
		\draw[-,blue, very thick] (4,0) -- (5,0);
\draw[-,blue, very thick] (4,0) -- (4,1);
\draw[-,blue, very thick] (4,0) -- (3,-1);
\draw[-,blue, very thick] (4,0) -- (4,-1);
\filldraw [blue] (4,0) circle (2pt);
		\draw (4.7,0.7) node{$\sigma_{14}$};
\draw (4.5,0) node[below]{$\tau_4$};
\draw (3.5,-0.5) node[above]{$\tau_2$};
\draw (4,0.5) node[left]{$\tau_1$};
\draw (3.9,-0.5) node[right]{$\tau_3$};
\draw (3.3,0.7) node[below left]{$\sigma_{12}$};
\draw (4.7,-0.75) node{$\sigma_{34}$};
\draw (3.6,-0.7) node[below]{$\sigma_{23}$};
		\draw (4,1.5) node{$\Sigma = \on{rec}(\Pi)$};

	\end{tikzpicture}
\end{figure}
We claim that 
\[
\CH_k(\X/S) \simeq \begin{cases}  
\Q \; &\text{ if } \; k= 3 \\
\Q^4 \; &\text{ if } \; k =2,\\
\Q^3 \; &\text{ if }\; k = 1, \\
0 \; &\text{ otherwise.}
\end{cases}
\]
Moreover, for $k=1$, generators are given by the closures of the torus orbits corresponding to $\sigma_{14}$, $\gamma_8$ and $\gamma_{10}$. 

For $k=2$  a set of generators is given by the closures of the torus orbits corresponding to $\omega_3, \tau_2, \tau_3$ and $\tau_4$.

In order to prove the claim note that all the cases except $k = 1,2$ are clear. We use the exact sequence in Theorem \ref{th:exact}. 
We specify that $\omega_1 = (0,0)$, $\omega_2 = (1,0)$ and $\omega_3 = (1,1)$ for the following calculations. 

For $k= 1$ we have
\begin{eqnarray*}
\begin{tikzcd}
\Q^8 \oplus \Q^6\arrow["{M}"]{r} &\Q^4 \oplus \Q^{10}\arrow{r} &\CH_1(\X/S) \arrow{r} &0,
\end{tikzcd}
\end{eqnarray*}
where, with an appropriate choice of generators, $M$ is given by the linear map 
\setcounter{MaxMatrixCols}{14}
\[
M = \begin{bmatrix} 1&0&0&0&0&0&-1&0&0&0&0&0&0&0\\ 
-1&0&1&0&0&0&0&0&0&0&0&0&0&0\\
0&0&-1&0&1&0&0&0&0&0&0&0&0&0\\
0&0&0&0&-1&0&1&0&0&0&0&0&0&0\\
0&0&0&0&0&0&1&1&1&0&0&0&0&0\\
1&1&0&0&0&0&0&0&0&1&0&0&0&0\\
1&1&0&0&0&0&0&0&0&0&0&1&0&0\\
0&0&1&1&0&0&0&0&0&0&-1&-1&0&0\\
0&0&0&0&1&1&0&0&0&0&0&-1&0&0\\
0&0&0&0&1&1&0&0&0&0&0&0&0&-1\\
0&0&0&0&0&0&1&1&0&0&0&0&1&0\\
0&0&0&0&0&0&0&0&0&-1&0&0&0&1\\
0&0&0&0&0&0&0&0&-1&-1&1&1&0&0\\
0&0&0&0&0&0&0&0&0&0&1&0&-1&0
 \end{bmatrix}
\]
We compute $\on{rank}(M) = 11$. Hence we obtain $\CH_1(\X/S) \simeq \Q^3$. In order to prove the claim regarding the generators, note that from $M$ we get all the relations.  We get 
\[
\sigma_{12}, \sigma_{23}, \sigma_{34}\sim -\sigma_{14}, \; \gamma_4 \sim 0, \; \gamma_5\sim -\gamma_8, \gamma_6 \sim \gamma_8, \; \gamma_7 \sim \gamma_10
\]
and 
\[
\gamma_9\sim -\gamma_{10}, \; \gamma_1 \sim -\gamma_{10},\; \gamma_2 \sim \gamma_8-\gamma_{10},\; \gamma_3 \sim \gamma_{10}-\gamma_8.
\]
Here, given a polyhedron $\mathfrak{p}$ we have written also $\mathfrak{p}$ to denote the corresponding subvariety.
This proves the claim for $k=1$.
For $k=2$ the exact sequence is of the form 
\begin{eqnarray*}
\begin{tikzcd}
\Q^3\arrow["{A}"]{r} &\Q^4 \oplus \Q^{3}\arrow{r} &\CH_2(\X/S) \arrow{r} &0,
\end{tikzcd}
\end{eqnarray*}

where, with an appropriate choice of generators, $A$ is given by the linear map
\setcounter{MaxMatrixCols}{3}
\[
A = \begin{bmatrix} 0&1&0\\
-1&-1&0\\
0&-1&0\\
1&0&0\\
0&0&1\\
1&0&1\\
1&1&1
 \end{bmatrix}
\]
We compute $\on{rank}(A) = 3$ and hence $\CH_2(\X/S) \simeq \Q^4$. To see the statement regarding the generators, note that we have the relations
\[
\tau_1 \sim \tau_2+\tau_3-\omega_3, \; \omega_2 \sim \tau_2-\tau_4-\omega_3,\; \omega_1\sim -\omega_3-\omega_2 \sim \tau_4-\tau_2,
\]
where, as above, given a polyhedron $\mathfrak{p}$ we have written also $\mathfrak{p}$ to denote the corresponding subvariety.

\end{exa}

\subsection{A formula for the rank}\label{subsec:rank}

Let $\Pi$ be a complete smooth SCR polyhedral complex in $N_{\R}$ and let $\X = \X_{\Pi}$ the corresponding complete, regular toric scheme of $S$-absolute dimension $n+1$. 

Consider the fan $c(\Pi)$ in $N_{\R} \times \R_{\geq 0}$. Since $\X$ is constructed from the cones of $c(\Pi)$ in basically the same way that a toric variety over a field is constructed from a fan, in some sense we may think of $\X$ as the (non-complete) toric variety $X_{c(\Pi)}$ with corresponding fan $c(\Pi)$. Using Poincaré dulaity (\cite[Theorem 3.16]{bot-ara}) and the description of the Chow cohomology groups of toric models in terms of piecewise polynomial functions (\cite[Theorem 3.19]{bot-ara}), in the same way as in the proof of \cite[Corollary~3.1]{BR}, one obtains the following formula for the rank of the Chow groups.

\begin{eqnarray}\label{eq:rank}
\sum_{k=0}^{n+1}\dim\CH_{n+1-k}(\X/S)z^k = \sum_{\sigma \in c(\Pi)}z^{\dim(\sigma)}(1-z)^{\codim(\sigma)}.
\end{eqnarray}
 
Moreover, from Corollary \ref{cor:special-toric} we know that $\CH_0(\X/S) = 0$, hence we only need to consider the sum up to $n$. 
We now verify the above formula for our examples in Section \ref{subsec:exa}.
\begin{exa}
Consider the toric model of $\P^1$ as in Example \ref{exa:p1}. Here the right hand side of \eqref{eq:rank} reads
\[
(r+1)z^2+ (r+2)z(1-z) + (1-z)^2 = rz+1.
\]
Hence we get 
\[
\dim\CH_k(\X/S) = \begin{cases} 0 \; &\text{ if }k = 0\\ r \; &\text{ if }k = 1\\ 1 \; &\text{ if }k = 2.\end{cases}.
\]
This agrees with the computations in Example \ref{exa:p1}.

For the toric model from Exampele \ref{ex1}, the right hand side of \eqref{eq:rank} is 
\[
5z^3 + 9z^2(1-z)+5z(1-z)^2+(1-z)^3 =  2z^2+2z+1
\]
and we see that the computations agree. 

Finally, for the toric model from Example \ref{ex2}, the right hand side of \eqref{eq:rank} is 
\[
8z^3 + 14z^2(1-z)+7z(1-z)^2+(1-z)^3 = 1 + 4z + 3z^2,
\]
so the computations agree. 
\end{exa}


\printbibliography

\end{document}